\newtheorem{thm}{Theorem} 
\newtheorem{lemma}[thm]{Lemma}
\newtheorem{prop}[thm]{Proposition}
\newtheorem{defn}[thm]{Definition}
\newcommand{\Tr}{{\rm Tr}}
\newcommand{\bb}[1]{\mathbb{#1}}
\newcommand{\cl}[1]{\mathcal{#1}}
\newcommand{\Ad}[1]{{\rm Ad}_{#1}}
\newcommand{\Pmaps}{\mathcal{P}}
\newcommand{\Pk}[1]{\Pmaps_{#1}}
\newcommand{\SP}{\mathcal{SP}}
\newcommand{\SPk}[1]{\SP_{#1}}
\newcommand{\CP}{\mathcal{CP}}
\begin{document}

\begin{frontmatter}


\title{Generation of Mapping Cones from Small Sets}
\author{Nathaniel Johnston}
\ead{njohns01@uoguelph.ca}
\address{Department of Mathematics and Statistics, University of Guelph, Guelph, Ontario N1G~2W1, Canada}

\author{{\L}ukasz Skowronek}
\ead{lukasz.skowronek@uj.edu.pl}
\address{Instytut Fizyki im. Smoluchowskiego, Uniwersytet Jagiello\'{n}ski, Reymonta 4, 30-059 Krak\'{o}w, Poland}

\author{Erling St{\o}rmer}
\ead{erlings@math.uio.no}
\address{Department of Mathematics, University of Oslo, P.O. Box 1053 Blindern, NO-0316 Oslo, Norway}

\begin{abstract}
We answer in the affirmative a recently-posed question that asked if there exists an ``untypical'' convex mapping cone -- i.e., one that does not arise from the transpose map and the cones of $k$-positive and $k$-superpositive maps. We explicitly construct such a cone based on atomic positive maps. Our general technique is to consider the smallest convex mapping cone generated by a single map, and we derive several results on such mapping cones. We use this technique to also present several other examples of untypical mapping cones, including a family of cones generated by spin factors. We also provide a full characterization of mapping cones generated by single elements in the qubit case in terms of their typicality.
\end{abstract}

\begin{keyword}
mapping cones \sep positive maps \sep entanglement

\MSC 15A99 \sep 15B48 \sep 47A80 \sep 81P40

\end{keyword}

\end{frontmatter}

\section{Introduction}

As an attempt to classify positive maps between operator algebras, one of us introduced the concept of \emph{mapping cones} \cite{S86}, which in the finite-dimensional case are closed cones of positive maps that are closed under the composition with completely positive maps. Very few examples of mapping cones have so far been exhibited, namely those which arise naturally from completely positive maps, $k$-positive maps and $k$-superpositive maps and their composition with the transpose map. In \cite{Sko11}, one of us posed the problem of whether there exist other mapping cones than those described above. In the present paper we answer this question in the affirmative as we exhibit several other examples of mapping cones, thus showing that the theory of positive maps is very complicated, even in the case of $3 \times 3$ matrices.

Our main approach to this problem is to consider convex mapping cones that are generated by small sets of positive maps -- that is, the smallest convex mapping cone that contains a given set of maps. We show that in many cases, the mapping cone generated by a single map gives a well-known typical mapping cone, such as the cone of completely positive maps or the cones of $k$-superpositive maps. We then provide several examples to show that untypical mapping cones can also arise naturally in this way. In particular, we show that the convex mapping cone generated by an atomic map is always untypical, we provide examples of non-atomic maps that generate untypical mapping cones, and we completely characterize whether such cones are typical or untypical in the case of $2 \times 2$ matrices.

These results are of interest in quantum entanglement theory, where mapping cones have been shown to play an important role \cite{SSZ09,SS10}, as the cones of $k$-positive, $k$-superpositive, completely positive, and completely co-positive maps all arise frequently in this setting \cite{HHH96,P96,HSR03,CK06,HHH09}. We consider a natural partial order that arises from our method of generating mapping cones, and briefly consider its implications in entanglement theory. In particular, we show that it is related to the notion of optimality of entanglement witnesses introduced in \cite{LKCH00}.

In Section~\ref{sec:prelims} we introduce our notation and the typical mapping cones of $k$-positive and $k$-superpositive maps. In Section~\ref{sec:cone_generated} we show how to construct a minimal convex mapping cone containing a given set of positive maps, and use this construction to build untypical convex mapping cones in Section~\ref{sec:untypical_cones}. We present another in-depth example in Section~\ref{sec:spin_factors}, where we construct a family of untypical mapping cones based on spin factors. We show that these mapping cones are analogous to the cones of $k$-superpositive maps in a natural way. We close in Section~\ref{sec:partial_order} by considering a partial order based on the generation of mapping cones that measures how well one set of positive maps detects entanglement compared to another set.

\section{Notation and Preliminaries}\label{sec:prelims}

We use $\cl{H}$ to denote a finite-dimensional Hilbert space and $\cl{L}(\cl{H})$ the space of linear maps on $\cl{H}$. If we wish to emphasize the dimension $n$ of a Hilbert space then we will write it as $\cl{H}_n$. If $X \in \cl{L}(\cl{H})$ is positive then we write $X \geq 0$.

Many of the cones of operators and linear maps that we deal with will be inspired by the cone of separable operators, which is particularly important in quantum information theory. An operator $0 \leq X \in \cl{L}(\cl{H}) \otimes \cl{L}(\cl{H})$ is called \emph{separable} if it can be written in the form
\begin{align*}
	X = \sum_i Y_i \otimes Z_i \quad \text{ with } \quad Y_i, Z_i \geq 0 \quad \forall \, i.
\end{align*}
Note that without loss of generality we can choose each $Y_i$ and $Z_i$ to have rank one. More generally, we say that the \emph{Schmidt number} \cite{TH00} of an operator $0 \leq X \in \cl{L}(\cl{H}) \otimes \cl{L}(\cl{H})$ (denoted by $SN(X)$) is the smallest integer $k$ so that we can write $X = \sum_i \mathbf{v}_i\mathbf{v}_i^*$ (here ${\mathbf v_i}^*$ is the dual vector of ${\mathbf v_i}$ and ${\mathbf v_i}{\mathbf v_i}^*$ is the outer product of ${\mathbf v_i}$ with itself), where each $\mathbf{v}_i$ can be written in the form $\mathbf{v}_i = \sum_{j=1}^k \mathbf{w}_{ij} \otimes \mathbf{z}_{ij}$. It is straightforward to verify that $SN(X) = 1$ if and only if $X$ is separable.

\subsection{Cones of Positive Maps}\label{sec:positive_maps}

A map $\Phi : \cl{L}(\cl{H}) \rightarrow \cl{L}(\cl{H})$ is said to be \emph{positive} if $\Phi(X) \geq 0$ whenever $X \geq 0$. Similarly, $\Phi$ is called \emph{$k$-positive} if $id_k \otimes \Phi$ is positive, where $id_k$ denotes the identity map on $\cl{L}(\cl{H}_k)$, and $\Phi$ is called \emph{completely positive} if $\Phi$ is $k$-positive for all $k \in \bb{N}$. Also, $\Phi$ is called \emph{$k$-copositive} if $\Phi$ is $k$-positive, where $t$ is the transpose map on $\cl{L}(\cl{H})$. We will use $\cl{P}_k(\cl{L}(\cl{H}))$ and $\cl{CP}(\cl{L}(\cl{H}))$ to denote the sets of $k$-positive and completely positive maps on $\cl{L}(\cl{H})$ respectively. We may abbreviate this notation as simply $\cl{P}_k$ or $\cl{CP}$ when the Hilbert space the maps act on is understood or unimportant. Note that $\cl{P}_k$ and $\cl{CP}$ are cones (i.e., they are closed under multiplication by non-negative scalars), closed, and convex.

Given an operator $A \in \cl{L}(\cl{H})$, we define the \emph{adjoint map} ${\rm Ad}_A : \cl{L}(\cl{H}) \rightarrow \cl{L}(\cl{H})$ by ${\rm Ad}_A(X) = AXA^*$, where $A^*$ is the Hermitian adjoint of $A$. It is clear that ${\rm Ad}_A$ is always completely positive. A well-known characterization of completely positive maps \cite{C75} says that $\cl{CP}(\cl{L}(\cl{H}_n)) = \cl{P}_n(\cl{L}(\cl{H}_n))$ and furthermore that $\Phi$ is completely positive if and only if there exist operators $A_i \in \cl{L}(\cl{H})$ such that $\Phi = \sum_i {\rm Ad}_{A_i}$. In other words, the adjoint maps are the extreme points of the set of completely positive maps.

Given a fixed orthonormal basis $\{{\mathbf e_i}\}_{i=1}^n$ of $\cl{H}_n$, the \emph{Jamio{\l}kowski--Choi isomorphism} \cite{C75,J72} associates a linear map $\Phi : \cl{L}(\cl{H}_n) \rightarrow \cl{L}(\cl{H}_n)$ with the operator $C_{\Phi} := \sum_{i,j=1}^n {\mathbf e_i}{\mathbf e_j}^* \otimes \Phi({\mathbf e_i}{\mathbf e_j}^*) \in \cl{L}(\cl{H}_n) \otimes \cl{L}(\cl{H}_n)$. The operator $C_{\Phi}$ is called the \emph{Choi matrix} of $\Phi$. For us, it will be useful to know that $\Phi$ is completely positive if and only if $C_{\Phi}$ is positive.

In the case when we can write $\Phi = \sum_i {\rm Ad}_{A_i}$ with ${\rm rank}(A_i) \leq k$ for all $i$, $\Phi$ is called \emph{$k$-superpositive} \cite{SSZ09} (or simply \emph{superpositive} \cite{And04} in the $k = 1$ case), and we denote these cones by $\SPk{k}(\cl{L}(\cl{H}))$ or simply $\SPk{k}$. In quantum information theory, superpositive maps are usually called \emph{entanglement-breaking maps} \cite{HSR03} because they are exactly the maps with the property that $(id \otimes \Phi)(X)$ is separable for all $X \geq 0$. More generally, $\Phi$ is $k$-superpositive if and only if $SN((id \otimes \Phi)(X)) \leq k$ for all $X \geq 0$, if and only if $SN(C_\Phi) \leq k$ \cite{CK06}.

Given a cone of positive maps $\cl{M} \subseteq \cl{P}_1$, we define the cone of Choi matrices $C_{\cl{M}} := \{ C_{\Phi} : \Phi \in \cl{M} \}$ and the cone of dual maps $\cl{M}^{\dagger} := \{ \Phi^{\dagger} : \Phi \in \cl{C} \}$, where $\Phi^{\dagger} : \cl{L}(\cl{H}) \rightarrow \cl{L}(\cl{H})$ is the unique map defined via the Hilbert-Schmidt inner product so that $\Tr(\Phi(X)Y) = \Tr(X\Phi^{\dagger}(Y))$ for all $X,Y \in \cl{L}(\cl{H})$.

\subsection{Mapping Cones}\label{sec:mapping_cone}

A \emph{mapping cone} \cite{S86} is a nonzero closed cone $\cl{M} \subseteq \cl{P}_1$ with the property that $\Phi \circ \Omega \circ \Psi \in \cl{M}$ whenever $\Omega \in \cl{M}$ and $\Phi,\Psi \in \cl{CP}$. The cones $\cl{P}_k$ of $k$-positive maps and $\SPk{k}$ of $k$-superpositive maps are the prototypical examples of mapping cones and can be seen repeatedly in recent work on mapping cones \cite{JS11,Sko11,SSZ09,SS10,S11}. Other well-known examples of mapping cones include those of the form $\cl{M} \circ t := \{\Phi \circ t : \Phi \in \cl{M}\}$, where $\cl{M}$ is equal to either $\cl{P}_k$ or $\SPk{k}$. Furthermore, the intersection $\cl{M}_1 \cap \cl{M}_2$ of two mapping cones $\cl{M}_1$ and $\cl{M}_2$ is again a mapping cone, as is the sum $\cl{M}_1 \vee \cl{M}_2 := \{ \Phi + \Psi : \Phi \in \cl{M}_1, \Psi \in \cl{M}_2 \}$.

In \cite{Sko11} it was noted that all convex mapping cones that have been considered in the past can be constructed via the methods described in the previous paragraph. Hence any mapping cone arising from $\cl{P}_k$ or $\SPk{k}$ via the operations $\cl{M} \mapsto \cl{M} \circ t$, $(\cl{M}_1,\cl{M}_2) \mapsto \cl{M}_1 \cap \cl{M}_2$, or $(\cl{M}_1,\cl{M}_2) \mapsto \cl{M}_1 \vee \cl{M}_2$ was called \emph{typical}, and it was asked whether or not there exist convex mapping cones that are \emph{untypical}. We construct such convex mapping cones in Sections~\ref{sec:untypical_cones} and~\ref{sec:spin_factors}, which shows that they really do provide a non-trivial generalization of the cones of $k$-positive and $k$-superpositive maps.

\section{Convex Mapping Cones Generated by Small Sets of Maps}\label{sec:cone_generated}

In a sense, there is nothing particularly special about the transpose map $t$ and its appearance in the definition of a typical convex mapping cone. The cone $\cl{CP} \circ t$ is the smallest convex mapping cone containing $t$, but there is no reason that we can't similarly define the smallest convex mapping cone containing any other given set of positive maps. Indeed, for any set of positive maps $\cl{Q} \subset \cl{P}_1$, we define the \emph{convex mapping cone generated by $\cl{Q}$} as follows:
\begin{align*}
	\cl{M}_{\cl{Q}} := \left\{ \sum_i \Phi_i \circ \Omega_i \circ \Psi_i : \Omega_i \in \cl{Q}, \Phi_i, \Psi_i \in \cl{CP} \ \forall \, i \right\}.
\end{align*}

It is clear that $\cl{M}_{\cl{Q}}$ is convex and a mapping cone, and furthermore that it is the smallest convex mapping cone such that contains $\cl{Q}$. That is, if $\cl{M}$ is a convex mapping cone such that $\cl{M} \supseteq \cl{Q}$, then $\cl{M} \supseteq \cl{M}_{\cl{Q}}$. Furthermore, for any sets $\cl{Q}_i$ we have $\cl{M}_{\cup_i \cl{Q}_i} = \vee_i \cl{M}_{\cl{Q}_i}$. Thus it is of particular interest to understand the convex mapping cones $\cl{M}_{\cl{Q}}$, where $\cl{Q}$ is a singleton set, since all convex mapping cones can be obtained by adding these cones together.

Note that the mapping cone generated by $\cl{Q}$ has a natural interpretation in quantum information theory if we use the Jamio{\l}kowski--Choi isomorphism. For example, in the simplest case of a singleton set, the set of Choi matrices of maps in $\cl{M}_{\{\Phi\}}$ is exactly the set $\{\sum_i {\rm Ad}_{A_i \otimes B_i}(C_\Phi)\}$. That is, it is the set of operators that can reached from $C_\Phi$ via maps of the form $\sum_i {\rm Ad}_{A_i \otimes B_i}$, which are called \emph{separable maps} \cite{CDKL01,Rai97}. The problem of determining what types of operators can be reached from a given operator by applying separable maps is a \emph{distillation problem}. For example, it is often asked whether a given operator can be distilled via a separable map into the ``maximally-entangled'' operator $\sum_{i,j=1}^n {\mathbf e_i}{\mathbf e_j}^* \otimes {\mathbf e_i}{\mathbf e_j}^*$ \cite{HHH98}. Equivalently, this is the problem of determining whether or not $id \in \cl{M}_{\{\Phi\}}$ (and hence $\cl{CP} \subseteq \cl{M}_{\{\Phi\}}$).

We now present some special cases of singleton sets $\cl{Q}$ that generate well-known mapping cones.
\begin{prop}\label{prop:sing_map_cone_id}
	$\cl{M}_{\{id\}} = \cl{CP}$.
\end{prop}
\begin{proof}
	Trivial, as the composition and sum of completely positive maps is again completely positive.
\end{proof}

\begin{prop}\label{prop:sing_map_cone_trans}
	$\cl{M}_{\{t\}} = \cl{CP} \circ t$.
\end{prop}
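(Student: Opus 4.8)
The plan is to establish the set equality $\cl{M}_{\{t\}} = \cl{CP} \circ t$ by proving two inclusions, exactly as in the proof of Proposition~\ref{prop:sing_map_cone_id}, with the key structural fact being that the transpose map intertwines completely positive maps in a controlled way. The forward inclusion $\cl{CP} \circ t \subseteq \cl{M}_{\{t\}}$ is the easier direction: any element of $\cl{CP} \circ t$ has the form $\Phi \circ t$ with $\Phi \in \cl{CP}$, and taking $\Psi = id$ (which is in $\cl{CP}$) we see $\Phi \circ t \circ id$ is a single term of the defining form for $\cl{M}_{\{t\}}$, so it lies in $\cl{M}_{\{t\}}$.

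For the reverse inclusion $\cl{M}_{\{t\}} \subseteq \cl{CP} \circ t$, I would take a generic element $\sum_i \Phi_i \circ t \circ \Psi_i$ with each $\Phi_i, \Psi_i \in \cl{CP}$ and show it can be rewritten as $\Xi \circ t$ for a single completely positive $\Xi$. The crucial observation is that $t \circ \Psi_i = \widetilde{\Psi_i} \circ t$, where $\widetilde{\Psi_i}$ is again completely positive. Concretely, writing $\Psi_i = \sum_j \Ad{A_{ij}}$ via the characterization of completely positive maps recalled in Section~\ref{sec:positive_maps}, one computes $t(\Ad{A}(t(X))) = t(A X^* A^*) = \overline{A} X \overline{A}^* = \Ad{\overline{A}}(X)$, so $t \circ \Ad{A} \circ t = \Ad{\overline{A}}$ and hence $t \circ \Psi_i = \big(\sum_j \Ad{\overline{A_{ij}}}\big) \circ t =: \widetilde{\Psi_i} \circ t$ with $\widetilde{\Psi_i} \in \cl{CP}$. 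Substituting gives
\begin{align*}
	\sum_i \Phi_i \circ t \circ \Psi_i = \sum_i \Phi_i \circ \widetilde{\Psi_i} \circ t = \Big( \sum_i \Phi_i \circ \widetilde{\Psi_i} \Big) \circ t.
\end{align*}
Since the composition and sum of completely positive maps is again completely positive, $\Xi := \sum_i \Phi_i \circ \widetilde{\Psi_i} \in \cl{CP}$, and therefore the original element equals $\Xi \circ t \in \cl{CP} \circ t$, completing this inclusion.

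The main obstacle, such as it is, lies in verifying cleanly that conjugating a completely positive map by the transpose yields another completely positive map. Everything hinges on the identity $t \circ \Ad{A} \circ t = \Ad{\overline{A}}$ and the fact that $t$ is an involution ($t \circ t = id$); once these are in hand, linearity lets one push the transpose through a sum of adjoint maps, and the closure of $\cl{CP}$ under composition and addition finishes the argument. I expect no genuine difficulty here, so the proof should be quite short in the same spirit as the preceding proposition.
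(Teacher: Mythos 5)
Your proof is correct and takes essentially the same approach as the paper, whose one-line argument rewrites $\Phi \circ t \circ \Psi = \big(\Phi \circ (t \circ \Psi \circ t)\big) \circ t$ and invokes $t \circ \Psi \circ t \in \cl{CP}$ exactly as you do, with you merely spelling out that last fact via Kraus operators and the identity $t \circ \Ad{A} \circ t = \Ad{\overline{A}}$, and handling sums and the trivial inclusion explicitly. The only blemish is a typo in your intermediate computation, where $\Ad{A}(t(X))$ should read $A X^{t} A^{*}$ rather than $A X^{*} A^{*}$; your final identity and conclusion are unaffected.
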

\begin{proof}
	If $\Phi,\Psi \in \cl{CP}$ then $\Phi \circ t \circ \Psi = \big(\Phi \circ (t \circ \Psi \circ t)\big) \circ t$. Because $t \circ \Psi \circ t \in \cl{CP}$, we have $\Phi \circ t \circ \Psi \in \cl{CP} \circ t$.
\end{proof}

\begin{prop}\label{prop:sing_map_cone_superpos}
	If $\Omega \in \SPk{1}$, then $\cl{M}_{\{\Omega\}} = \SPk{1}$.
\end{prop}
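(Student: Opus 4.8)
The plan is to prove the two inclusions separately. The inclusion $\cl{M}_{\{\Omega\}} \subseteq \SPk{1}$ is immediate: $\SPk{1}$ is itself a convex mapping cone (one of the typical examples noted in Section~\ref{sec:mapping_cone}) and it contains $\Omega$, so by the minimality property of $\cl{M}_{\{\Omega\}}$ established above, $\cl{M}_{\{\Omega\}} \subseteq \SPk{1}$. All the content lies in the reverse inclusion $\SPk{1} \subseteq \cl{M}_{\{\Omega\}}$.

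For the reverse inclusion I would first reduce to the extreme rays of $\SPk{1}$. By definition, every element of $\SPk{1}$ is a finite sum $\sum_i \Ad{A_i}$ with each $A_i$ of rank at most one, and $\cl{M}_{\{\Omega\}}$ is a convex cone, hence closed under finite sums and nonnegative rescalings. It therefore suffices to show that $\Ad{\mathbf{u}\mathbf{w}^*} \in \cl{M}_{\{\Omega\}}$ for an \emph{arbitrary} rank-one operator $\mathbf{u}\mathbf{w}^*$; summing these and rescaling then recovers all of $\SPk{1}$.

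The crux is a collapse computation. Since $\Omega \in \SPk{1}$ and $\Omega \neq 0$ (a zero map does not generate a mapping cone), write $\Omega = \sum_j \Ad{\mathbf{u}_j\mathbf{w}_j^*}$ with at least one nonzero summand. Conjugate $\Omega$ by the rank-one completely positive maps $\Phi = \Ad{\mathbf{u}\mathbf{b}^*}$ on the left and $\Psi = \Ad{\mathbf{c}\mathbf{w}^*}$ on the right, where $\mathbf{b},\mathbf{c}$ are auxiliary vectors to be chosen. Using $\Ad{B} \circ \Ad{M} \circ \Ad{C} = \Ad{BMC}$ together with $(\mathbf{u}\mathbf{b}^*)(\mathbf{u}_j\mathbf{w}_j^*)(\mathbf{c}\mathbf{w}^*) = (\mathbf{b}^*\mathbf{u}_j)(\mathbf{w}_j^*\mathbf{c})\,\mathbf{u}\mathbf{w}^*$, each summand becomes a nonnegative scalar multiple of the \emph{single} rank-one adjoint map $\Ad{\mathbf{u}\mathbf{w}^*}$, so that
\begin{equation*}
	\Phi \circ \Omega \circ \Psi = \Big( \sum_j |(\mathbf{b}^*\mathbf{u}_j)(\mathbf{w}_j^*\mathbf{c})|^2 \Big)\, \Ad{\mathbf{u}\mathbf{w}^*} =: c\, \Ad{\mathbf{u}\mathbf{w}^*}, \qquad c \geq 0.
\end{equation*}
Regardless of how many terms $\Omega$ has, the output is forced onto the ray of $\Ad{\mathbf{u}\mathbf{w}^*}$, with $\mathbf{u}$ and $\mathbf{w}$ exactly the target vectors. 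It remains to arrange $c > 0$: choosing a nonzero summand $j_0$ and taking $\mathbf{b} = \mathbf{u}_{j_0}$, $\mathbf{c} = \mathbf{w}_{j_0}$ makes the $j_0$ contribution equal to $\|\mathbf{u}_{j_0}\|^4 \|\mathbf{w}_{j_0}\|^4 > 0$ while the remaining terms stay nonnegative. Then $\Ad{\mathbf{u}\mathbf{w}^*} = c^{-1}\, \Phi \circ \Omega \circ \Psi \in \cl{M}_{\{\Omega\}}$, which completes the reduction.

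I expect the main obstacle to be recognizing and engineering this collapse, namely that conjugating $\Omega$ on \emph{both} sides by rank-one completely positive maps annihilates the internal structure of its decomposition and returns a single rank-one adjoint map up to a scalar, and then verifying that the scalar can be kept strictly positive so the rescaling is legitimate. Everything else -- the reduction to extreme rays and the reassembly of a general element of $\SPk{1}$ by convexity -- is routine bookkeeping.
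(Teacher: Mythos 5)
Your proof is correct, but its second half takes a genuinely different route from the paper's. For the inclusion $\SPk{1} \subseteq \cl{M}_{\{\Omega\}}$ the paper simply invokes Lemma~2.4 of \cite{S86}, which asserts that \emph{every} convex mapping cone contains $\SPk{1}$; you instead give a self-contained construction, and your collapse computation checks out: $\Ad{B}\circ\Ad{M}\circ\Ad{C}=\Ad{BMC}$, the scalar $\lambda_j=(\mathbf{b}^*\mathbf{u}_j)(\mathbf{w}_j^*\mathbf{c})$ absorbs into the adjoint map as $|\lambda_j|^2$, and the choice $\mathbf{b}=\mathbf{u}_{j_0}$, $\mathbf{c}=\mathbf{w}_{j_0}$ for a nonzero summand indeed forces $c>0$, after which the rescaling is legitimate since $\cl{M}_{\{\Omega\}}$ is a cone (or simply absorb $c^{-1}$ into $\Phi$). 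Two remarks on the comparison. First, your argument tacitly assumes $\Omega\neq 0$, as you note; the same assumption is implicit in the paper, since mapping cones are nonzero by definition. Second, your sandwich does not actually need the rank-one Kraus decomposition of $\Omega$: for an \emph{arbitrary} nonzero positive map one has $\Ad{\mathbf{u}\mathbf{b}^*}\circ\Omega\circ\Ad{\mathbf{c}\mathbf{w}^*}=\bigl(\mathbf{b}^*\Omega(\mathbf{c}\mathbf{c}^*)\mathbf{b}\bigr)\,\Ad{\mathbf{u}\mathbf{w}^*}$, and positivity of $\Omega$ lets you choose $\mathbf{b},\mathbf{c}$ making the scalar strictly positive -- this is essentially the proof of the cited Lemma~2.4, so your approach, slightly streamlined, re-proves the black-boxed lemma in full generality. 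What each route buys: the paper's citation gives a two-line proof, while yours makes the proposition self-contained and exposes the mechanism (two-sided conjugation by rank-one completely positive maps annihilates the internal structure of any generator), at the cost of a superfluous appeal to the superpositivity of $\Omega$ in the hard direction; the hypothesis $\Omega\in\SPk{1}$ is only genuinely needed for the easy inclusion $\cl{M}_{\{\Omega\}}\subseteq\SPk{1}$, where you and the paper argue identically via minimality.
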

\begin{proof}
	Because $\SPk{1}$ is a convex mapping cone, it follows that $\cl{M}_{\{\Omega\}} \subseteq \SPk{1}$ by the fact that $\cl{M}_{\{\Omega\}}$ is the smallest convex mapping cone containing $\Omega$.	To see the other inclusion, recall \cite[Lemma~2.4]{S86} that if $\cl{M}$ is any convex mapping cone then $\cl{M} \supseteq \SPk{1}$, so $\cl{M}_{\{\Omega\}} \supseteq \SPk{1}$.
\end{proof}

Our next result of this type concerns the \emph{reduction map} $R : \cl{L}(\cl{H}_n) \rightarrow \cl{L}(\cl{H}_n)$ defined as follows:
	\begin{align*}
		R(X) = \Tr(X)I - X.
	\end{align*}
	The map $R$ is clearly positive because $\Tr(X) \geq \|X\|$ whenever $X \geq 0$. However, it is easily-verified that $R$ is not completely positive (or even $2$-positive \cite{Tom85}). The positivity properties of $R$ have led to it playing an important role in quantum information theory \cite{CAG99,HH99}.
\begin{prop}\label{prop:sing_map_cone_reduc}
	$\cl{M}_{\{R\}} = \SPk{2} \circ t$ (and equivalently, $\cl{M}_{\{R \circ t\}} = \SPk{2}$).
\end{prop}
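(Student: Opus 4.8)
The plan is to establish the equivalent statement $\cl{M}_{\{R\circ t\}} = \SPk{2}$ and then transport it to $R$ itself. For the transport I would first record the general identity $\cl{M}_{\{\Phi\circ t\}} = \cl{M}_{\{\Phi\}}\circ t$, valid for any $\Phi\in\cl{P}_1$: a generator $\Phi_i\circ(\Phi\circ t)\circ\Psi_i$ can be rewritten as $\big(\Phi_i\circ\Phi\circ(t\circ\Psi_i\circ t)\big)\circ t$, and since $t\circ\Psi_i\circ t\in\CP$ --- exactly the manipulation used in the proof of Proposition~\ref{prop:sing_map_cone_trans} --- the two cones coincide. Taking $\Phi = R$ gives $\cl{M}_{\{R\circ t\}} = \cl{M}_{\{R\}}\circ t$, and composing on the right with $t$ (using $t\circ t = id$) turns $\cl{M}_{\{R\circ t\}} = \SPk{2}$ into the desired $\cl{M}_{\{R\}} = \SPk{2}\circ t$.

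The core claim $\cl{M}_{\{R\circ t\}} = \SPk{2}$ I would approach through the Jamio{\l}kowski--Choi isomorphism. A direct computation gives $C_{R\circ t} = I\otimes I - F$, where $F = \sum_{i,j} e_i e_j^*\otimes e_j e_i^*$ is the swap operator; equivalently $C_{R\circ t} = 2P_-$, twice the projection onto the antisymmetric subspace of $\cl{H}_n\otimes\cl{H}_n$. Each antisymmetric basis vector $e_i\wedge e_j$ has Schmidt rank $2$ while $P_-$ is entangled, so $SN(C_{R\circ t}) = 2$ and hence $R\circ t\in\SPk{2}$. Because $\SPk{2}$ is itself a convex mapping cone, it contains the smallest convex mapping cone containing $R\circ t$, which gives the inclusion $\cl{M}_{\{R\circ t\}}\subseteq\SPk{2}$ immediately.

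The reverse inclusion is the substantial part. Recalling that the Choi matrices of $\cl{M}_{\{R\circ t\}}$ are precisely the operators $\sum_i\Ad{A_i\otimes B_i}(2P_-)$ obtainable from $2P_-$ by separable maps, and that $\SPk{2}$ corresponds to the cone $\{X\ge 0 : SN(X)\le 2\}$, which is generated by its rank-one members $vv^*$ with $v$ of Schmidt rank at most $2$, it suffices to realize each such $vv^*$ as a single image $(A\otimes B)(2P_-)(A\otimes B)^*$. Given a Schmidt decomposition $v = \lambda_1 f_1\otimes g_1 + \lambda_2 f_2\otimes g_2$, I would take $A$ supported on $\linspan\{e_1,e_2\}$ with $Ae_1 = \lambda_1 f_1$ and $Ae_2 = \lambda_2 f_2$, and $B$ with $Be_1 = -g_2$ and $Be_2 = g_1$. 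Then $A\otimes B$ annihilates every $e_i\wedge e_j$ except $e_1\wedge e_2$, which it maps to a multiple of $v$, so that $(A\otimes B)(2P_-)(A\otimes B)^* = vv^*$ exactly; summing over a Schmidt-number-$2$ decomposition of an arbitrary $X$ then yields $\SPk{2}\subseteq\cl{M}_{\{R\circ t\}}$. I expect this explicit construction to be the main obstacle: the point to get right is that, although $2P_-$ has large rank, a single separable term can collapse it onto the $2\times 2$ antisymmetric block --- leaving a maximally entangled singlet --- which local operators can then steer to any vector of Schmidt rank at most $2$. The conceptual content is simply that $2P_-$ locally contains a $2\times 2$ maximally entangled state, and therefore generates all of $\SPk{2}$.
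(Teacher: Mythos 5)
Your proposal is correct and follows essentially the same route as the paper: both prove $\cl{M}_{\{R\circ t\}}=\SPk{2}$ by identifying $C_{R\circ t}$ with twice the antisymmetric projection, written as $\sum_{i>j}\big(\mathbf{e}_i\otimes\mathbf{e}_j-\mathbf{e}_j\otimes\mathbf{e}_i\big)\big(\mathbf{e}_i\otimes\mathbf{e}_j-\mathbf{e}_j\otimes\mathbf{e}_i\big)^*$ (giving $SN(C_{R\circ t})\leq 2$ and hence one inclusion), and then realizing each rank-one Schmidt-rank-$2$ term of $C_\Phi$ as a single ${\rm Ad}_{A\otimes B}(C_{R\circ t})$ with local operators collapsing everything onto the $\mathbf{e}_1\wedge\mathbf{e}_2$ block --- exactly the paper's choice of $A_k$, $B_k$ up to signs and transposes, with your explicit transport lemma $\cl{M}_{\{\Phi\circ t\}}=\cl{M}_{\{\Phi\}}\circ t$ being a small addition the paper leaves implicit. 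One trivial fix: for your claim that $A\otimes B$ annihilates every $\mathbf{e}_i\wedge\mathbf{e}_j$ other than $\mathbf{e}_1\wedge\mathbf{e}_2$ you must also set $B\mathbf{e}_j=0$ for $j\geq 3$ (you stated the support condition only for $A$), as the paper does for both operators.
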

\begin{proof}
	To see that $\cl{M}_{\{R \circ t\}} \subseteq \SPk{2}$, it is enough to show that $R \circ t \in \SPk{2}$. To this end, we consider its Choi matrix:
	\begin{align*}
		C_{R \circ t} = I - \sum_{i,j=1}^n \mathbf{e}_i\mathbf{e}_j^* \otimes \mathbf{e}_j\mathbf{e}_i^* = \sum_{i>j=1}^n \big(\mathbf{e}_i \otimes \mathbf{e}_j - \mathbf{e}_j \otimes \mathbf{e}_i\big)\big(\mathbf{e}_i \otimes \mathbf{e}_j - \mathbf{e}_j \otimes \mathbf{e}_i\big)^*,
	\end{align*}
	which evidently has Schmidt number no larger than $2$. It follows that $\cl{M}_{\{R \circ t\}} \subseteq \SPk{2}$.
	
	To see that $\SPk{2} \subseteq \cl{M}_{\{R \circ t\}}$, suppose that $\Phi \in \SPk{2}$. Then $SN(C_\Phi) \leq 2$, so there are families of vectors $\{\mathbf{a}_k\},\{\mathbf{b}_k\},\{\mathbf{c}_k\},\{\mathbf{d}_k\} \subseteq \cl{H}_n$ such that
	\begin{align*}
		C_{\Phi} = \sum_{k} \big(\mathbf{a}_k \otimes \mathbf{b}_k + \mathbf{c}_k \otimes \mathbf{d}_k\big)\big(\mathbf{a}_k \otimes \mathbf{b}_k + \mathbf{c}_k \otimes \mathbf{d}_k\big)^*.
	\end{align*}
	Let $A_k \in \cl{L}(\cl{H}_n)$ be the operator defined by $A_k^t\mathbf{e}_1 = \mathbf{a}_k$, $A_k^t\mathbf{e}_2 = \mathbf{c}_k$, and $A_k^t\mathbf{e}_j = 0$ for $j \geq 3$. Let $B_k \in \cl{L}(\cl{H}_n)$ be the operator defined by $B_k\mathbf{e}_1 = \mathbf{d}_k$, $B_k\mathbf{e}_2 = -\mathbf{b}_k$, and $B_k\mathbf{e}_j = 0$ for $j \geq 3$. Then
	\begin{align*}
		C_{{\rm Ad}_{A_k} \circ (R \circ t) \circ {\rm Ad}_{B_k}} & = (A_k^t \otimes B_k)C_{R \circ t}(A_k^t \otimes B_k)^* \\
		& = (A_k^t \otimes B_k)\left(\sum_{i>j=1}^n \big(\mathbf{e}_i \otimes \mathbf{e}_j - \mathbf{e}_j \otimes \mathbf{e}_i\big)\big(\mathbf{e}_i \otimes \mathbf{e}_j - \mathbf{e}_j \otimes \mathbf{e}_i\big)^*\right)(A_k^t \otimes B_k)^* \\
		& = \big(\mathbf{a}_k \otimes \mathbf{b}_k + \mathbf{c}_k \otimes \mathbf{d}_k\big)\big(\mathbf{a}_k \otimes \mathbf{b}_k + \mathbf{c}_k \otimes \mathbf{d}_k\big)^*.
	\end{align*}
	It follows that $\Phi = \sum_k {\rm Ad}_{A_k} \circ (R \circ t) \circ {\rm Ad}_{B_k}$, so $\Phi \in \cl{M}_{\{R \circ t\}}$ and $\SPk{2} \subseteq \cl{M}_{\{R \circ t\}}$.
\end{proof}

Propositions~\ref{prop:sing_map_cone_id}, \ref{prop:sing_map_cone_superpos}, and~\ref{prop:sing_map_cone_reduc} show that there are singleton sets $\cl{Q}$ such that $\cl{M}_{\cl{Q}} = \SPk{k}$ for $k \in \{1,2,n\}$. Our final result of this type demonstrates that there is in fact a singleton set $\cl{Q}$ such that $\cl{M}_{\cl{Q}} = \SPk{k}$ for any $1 \leq k \leq n$.
\begin{prop}\label{prop:sing_map_cone_ksuper}
	Let $E_k \in \cl{L}(\cl{H}_n)$ be any operator with rank $k$. Then $\cl{M}_{\{{\rm Ad}_{E_k}\}} = \SPk{k}$.
\end{prop}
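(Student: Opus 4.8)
The plan is to prove the two inclusions separately, with essentially all of the content residing in a rank-factorization fact used for the reverse containment. For the forward inclusion $\cl{M}_{\{{\rm Ad}_{E_k}\}} \subseteq \SPk{k}$, I would first record that $\SPk{k}$ is itself a convex mapping cone: if $\Omega = \sum_i {\rm Ad}_{A_i} \in \SPk{k}$ with ${\rm rank}(A_i) \leq k$ and $\Phi = \sum_j {\rm Ad}_{P_j}, \Psi = \sum_l {\rm Ad}_{Q_l} \in \CP$, then $\Phi \circ \Omega \circ \Psi = \sum_{i,j,l} {\rm Ad}_{P_j A_i Q_l}$ and each $P_j A_i Q_l$ has rank at most ${\rm rank}(A_i) \leq k$. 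Since ${\rm Ad}_{E_k}$ is a single adjoint map whose operator has rank $k$, it lies in $\SPk{k}$; because $\cl{M}_{\{{\rm Ad}_{E_k}\}}$ is by definition the smallest convex mapping cone containing ${\rm Ad}_{E_k}$, the forward inclusion is immediate.

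For the reverse inclusion $\SPk{k} \subseteq \cl{M}_{\{{\rm Ad}_{E_k}\}}$, I would exploit that every element of $\SPk{k}$ is, by definition, a finite sum of maps ${\rm Ad}_A$ with ${\rm rank}(A) \leq k$, and that $\cl{M}_{\{{\rm Ad}_{E_k}\}}$ is closed under addition; hence it suffices to show ${\rm Ad}_A \in \cl{M}_{\{{\rm Ad}_{E_k}\}}$ for each such $A$. Since ${\rm Ad}_P \circ {\rm Ad}_{E_k} \circ {\rm Ad}_Q = {\rm Ad}_{P E_k Q}$ and ${\rm Ad}_P, {\rm Ad}_Q$ are completely positive for any $P, Q \in \cl{L}(\cl{H}_n)$, the entire problem reduces to the purely linear-algebraic claim that any $A$ with ${\rm rank}(A) \leq k$ factors as $A = P E_k Q$ for suitable $P, Q$.

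To establish this factorization, I would take singular value decompositions $E_k = \sum_{m=1}^k \sigma_m \mathbf{u}_m \mathbf{v}_m^*$ (all $\sigma_m > 0$, as ${\rm rank}(E_k) = k$) and $A = \sum_{m=1}^r \tau_m \mathbf{p}_m \mathbf{q}_m^*$ with $r = {\rm rank}(A) \leq k$. I would then define $Q$ by $Q\mathbf{q}_m = \sigma_m^{-1}\mathbf{v}_m$ for $1 \leq m \leq r$ and $Q\mathbf{x} = 0$ for $\mathbf{x}$ orthogonal to $\mathrm{span}\{\mathbf{q}_m\}_{m=1}^r$, and define $P$ by $P\mathbf{u}_m = \tau_m \mathbf{p}_m$ for $1 \leq m \leq r$. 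A direct check gives $P E_k Q\,\mathbf{q}_m = \tau_m \mathbf{p}_m = A\mathbf{q}_m$ while $P E_k Q\,\mathbf{x} = 0 = A\mathbf{x}$ on the orthogonal complement of $\mathrm{span}\{\mathbf{q}_m\}_{m=1}^r$, so $A = P E_k Q$ and therefore ${\rm Ad}_A = {\rm Ad}_P \circ {\rm Ad}_{E_k} \circ {\rm Ad}_Q \in \cl{M}_{\{{\rm Ad}_{E_k}\}}$, completing the reduction.

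I do not expect a genuine obstacle. The only step requiring care is the factorization lemma, and in particular ensuring that $P E_k Q$ contributes nothing outside $\mathrm{span}\{\mathbf{q}_m\}_{m=1}^r$, which is handled by letting $Q$ annihilate that complement. The hypothesis that ${\rm rank}(E_k)$ is exactly $k$ (and not merely $\leq k$) is used essentially: it supplies exactly enough nonzero singular directions $\mathbf{v}_m$ to realize every target of rank up to $k$, which is precisely what matches the generated cone to $\SPk{k}$.
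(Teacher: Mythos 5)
Your proposal is correct and follows essentially the same route as the paper: both inclusions are handled identically, with the reverse inclusion reduced to factoring each rank-$\leq k$ operator as $A = P E_k Q$ and writing ${\rm Ad}_A = {\rm Ad}_P \circ {\rm Ad}_{E_k} \circ {\rm Ad}_Q$. The only difference is that the paper simply asserts the existence of this factorization from the rank condition, whereas you verify it explicitly via singular value decompositions -- a worthwhile elaboration, but not a different argument.
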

\begin{proof}
	Because ${\rm rank}(E_k) = k$, we have ${\rm Ad}_{E_k} \in \SPk{k}$, so $\cl{M}_{\{{\rm Ad}_{E_k}\}} \subseteq \SPk{k}$. To see the other inclusion, consider an arbitrary map $\Phi \in \SPk{k}$, written in the form $\Phi = \sum_i {\rm Ad}_{A_i}$ with ${\rm rank}(A_i) \leq k$ for all $i$. Because of this rank condition, there exist operators $\{B_i\}$ and $\{C_i\}$ so that $A_i = B_i E_k C_i$ for all $i$. Then $\Phi = \sum_i {\rm Ad}_{B_i} \circ {\rm Ad}_{E_k} \circ {\rm Ad}_{C_i}$, so $\Phi \in \cl{M}_{\{{\rm Ad}_{E_k}\}}$ and $\SPk{k} \subseteq \cl{M}_{\{{\rm Ad}_{E_k}\}}$.
\end{proof}

We now have seen that there are singleton sets that give rise to the cones $\SPk{k}$ (and also $\SPk{k} \circ t$) for any $1 \leq k \leq n$. The analogous problem of generating $\cl{P}_k$ and $\cl{P}_k \circ t$, however, is much more difficult. In the $n = 2$ case, we have $\cl{P}_1 = \cl{CP} \vee \cl{CP} \circ t$, so $\cl{P}_1 = \cl{P}_1 \circ t = \cl{M}_{\{id,t\}}$. However, when $n \geq 3$ it is the case that $\cl{P}_1$ is not generated by any finite (or even countable) set of maps \cite{Sko12}. In general we are not aware of an answer to the question of whether or not there exists a finite set $\cl{Q}_k$ such that $\cl{M}_{\cl{Q}_k} = \cl{P}_k$ for $1 < k < n$.

\section{Examples of Untypical Mapping Cones}\label{sec:untypical_cones}

In the previous section we showed that the mapping cone generated by a single map, in many cases, gives a well-known typical mapping cone. In contrast, we now present several examples of maps that generate mapping cones that are untypical. In particular, we show that all atomic maps lead to untypical mapping cones, yet there are many non-atomic maps that also give rise to untypical mapping cones. We also completely characterize (un)typicality of mapping cones generated in this way in the $n = 2$ case.

\subsection{Atomic Maps Generate Untypical Mapping Cones}\label{sec:atomic_map}

We now specialize to the case when $\cl{Q} = \{\Phi\}$, where $\Phi : \cl{L}(\cl{H}_n) \rightarrow \cl{L}(\cl{H}_n)$ is an \emph{atomic map} -- that is, a positive map that can not be written as a sum of a $2$-positive and a $2$-copositive map. Atomic maps exist exactly when $n \geq 3$, and the most famous example is the \emph{Choi map} \cite{Cho75} defined by
\begin{align*}
	(x_{ij}) \mapsto \begin{bmatrix}x_{11} + x_{33} & -x_{12} & -x_{13} \\ -x_{21} & x_{11} + x_{22} & -x_{23} \\ -x_{31} & -x_{32} & x_{22} + x_{33}\end{bmatrix}.
\end{align*}

There has been much work done recently to construct positive atomic maps when $n = 3$ \cite{TT88,CKL92,Osa91,BFP04b,Hal06}. We now show that any such map generates an untypical mapping cone.

\begin{thm}\label{thm:atomic_untypical}
	Let $\Phi : \cl{L}(\cl{H}_n) \rightarrow \cl{L}(\cl{H}_n)$ be an atomic positive map. Then $\cl{M}_{\{\Phi\}}$ is untypical.
\end{thm}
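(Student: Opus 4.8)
The plan is to argue by contradiction: assume $\cl{M}_{\{\Phi\}}$ is typical and show this forces $\Phi$ to be a sum of a $2$-positive and a $2$-copositive map, contradicting atomicity. The starting observation is that $\cl{M}_{\{\Phi\}}$ is the \emph{smallest} convex mapping cone containing $\Phi$, so any typical cone $\cl{M}$ with $\Phi \in \cl{M}$ must satisfy $\cl{M} \supseteq \cl{M}_{\{\Phi\}}$. If $\cl{M}_{\{\Phi\}}$ is itself typical, then it is built from the cones $\cl{P}_k$ and $\SPk{k}$ via the operations $\cl{M} \mapsto \cl{M}\circ t$, intersection, and sum $\vee$. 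The strategy is to understand the structure of such typical cones well enough to pin down which ones can contain an atomic map while being minimal over it.

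First I would recall what it means for a typical cone to contain $\Phi$. Because $\Phi$ is atomic, it is positive but not $2$-positive and not $2$-copositive; moreover $\Phi \notin \SPk{k}$ for any $k$ (superpositive maps are completely positive, hence not atomic) and likewise $\Phi \notin \SPk{k}\circ t$. So among the building blocks, the only ones that can contain $\Phi$ are $\cl{P}_1 = \cl{P}_1 \circ t$ (since $\Phi$ is merely positive) — the cones $\cl{P}_k$, $\cl{P}_k \circ t$ for $k \geq 2$, and all the $\SPk{k}$, $\SPk{k}\circ t$ exclude $\Phi$. The key structural step is to analyze how intersection and sum interact with membership of $\Phi$: if a typical cone $\cl{M} = \bigvee_j (\cl{M}_{1,j} \cap \cdots)$ contains $\Phi$, I want to deduce that it must already contain a cone that is ``too large'' to be minimal over $\Phi$, or else that containment of $\Phi$ forces $\Phi$ into a $2$-positive-plus-$2$-copositive decomposition.

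The cleanest route is to exploit the dual description. A typical cone, being an intersection/sum of cones of the form $\cl{P}_k^{(\circ t)}$ and $\SPk{k}^{(\circ t)}$, has a well-understood dual built from the same families (using $\cl{P}_k^\dagger = \SPk{k}$-type dualities and the behavior of $t$ and $\vee/\cap$ under duality). I would show that any typical cone $\cl{M}$ that is \emph{not} all of $\cl{P}_1$ is contained in one of the proper maximal typical cones, namely $\cl{P}_2 \vee (\cl{P}_2 \circ t)$ — the cone of maps that decompose as a $2$-positive plus a $2$-copositive map. Since an atomic $\Phi$ lies outside $\cl{P}_2 \vee (\cl{P}_2 \circ t)$ by definition, any typical cone containing $\Phi$ must therefore equal $\cl{P}_1$ itself. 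But $\cl{P}_1$ is not minimal over $\Phi$: it properly contains the smaller convex mapping cone $\cl{M}_{\{\Phi\}}$, because $\cl{P}_1$ contains maps (e.g.\ all of $\SPk{1}$ together with other positive maps not reachable from $\Phi$ by separable maps) that cannot be obtained as $\sum_i \Phi_i \circ \Phi \circ \Psi_i$ with $\Phi_i,\Psi_i \in \cl{CP}$. Hence $\cl{M}_{\{\Phi\}} \subsetneq \cl{P}_1$, and $\cl{M}_{\{\Phi\}}$ cannot coincide with any typical cone.

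The main obstacle I anticipate is the structural classification step: showing that every proper typical cone is contained in $\cl{P}_2 \vee (\cl{P}_2 \circ t)$, which amounts to checking that this cone is the unique maximal proper typical cone. This requires a careful bookkeeping of how the generators $\cl{P}_k, \SPk{k}$ and the three operations combine, and in particular arguing that no sum of smaller typical cones can escape being trapped under $\cl{P}_2 \vee (\cl{P}_2 \circ t)$ unless it already fills out $\cl{P}_1$. A secondary subtlety is verifying the strict inclusion $\cl{M}_{\{\Phi\}} \subsetneq \cl{P}_1$; the safest way is to exhibit, via the Choi-matrix/separable-map picture, a concrete positive map not reachable from $C_\Phi$ by separable operations, or to invoke the fact (available from the generation framework) that $\cl{M}_{\{\Phi\}} = \cl{P}_1$ would force $\Phi$ to distill the identity, which an atomic map cannot do. Assembling these two facts — that typicality would force $\cl{M}_{\{\Phi\}} = \cl{P}_1$, yet minimality forces $\cl{M}_{\{\Phi\}} \subsetneq \cl{P}_1$ — completes the contradiction.
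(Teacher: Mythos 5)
Your overall reduction is exactly the paper's: atomicity gives $\Phi \notin \cl{P}_2 \vee (\cl{P}_2 \circ t)$, and since the only typical mapping cone not contained in $\cl{P}_2 \vee (\cl{P}_2 \circ t)$ is $\cl{P}_1$ itself, everything comes down to showing $\cl{M}_{\{\Phi\}} \neq \cl{P}_1$. But at precisely that decisive point your argument stops being a proof. You offer two routes: exhibiting a concrete positive map not reachable from $C_\Phi$ by separable operations (never carried out), or invoking ``the fact'' that $\cl{M}_{\{\Phi\}} = \cl{P}_1$ would force $\Phi$ to distill the identity, ``which an atomic map cannot do.'' That last clause is circular: the statement that an atomic map cannot distill the identity is equivalent to $id \notin \cl{M}_{\{\Phi\}}$, which is the entire remaining content of the theorem; it is not an available fact to invoke. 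The paper fills this gap with a short concrete argument: since $id$ is extreme in $\cl{P}_1$ and every element of $\cl{M}_{\{\Phi\}}$ is a sum of positive maps of the form $\Ad{A} \circ \Phi \circ \Ad{B}$ (decompose the completely positive factors into adjoint maps), membership $id \in \cl{M}_{\{\Phi\}}$ forces a single-term representation $id = \Ad{A} \circ \Phi \circ \Ad{B}$; because $id$ has full rank as a linear operator on $\cl{L}(\cl{H}_n)$, both $A$ and $B$ must be invertible, whence $\Phi = \Ad{A^{-1}} \circ id \circ \Ad{B^{-1}}$ is completely positive, contradicting atomicity (an atomic map is not even $2$-positive). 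Note that this step uses only that $\Phi$ is not completely positive; atomicity enters solely in the first reduction. Without some version of this extremality-plus-invertibility argument, your contradiction never materializes.

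A secondary but genuine error: your parenthetical justification for $\cl{M}_{\{\Phi\}} \subsetneq \cl{P}_1$ cites ``all of $\SPk{1}$'' as an example of maps in $\cl{P}_1$ not obtainable as $\sum_i \Phi_i \circ \Phi \circ \Psi_i$. This is false -- by \cite[Lemma~2.4]{S86} (used in the paper's Proposition~3), \emph{every} convex mapping cone contains $\SPk{1}$, so $\SPk{1} \subseteq \cl{M}_{\{\Phi\}}$ always; superpositive maps are always reachable. The correct witness for strict inclusion is $id$ itself, via the argument above. Your structural claim that every proper typical cone is contained in $\cl{P}_2 \vee (\cl{P}_2 \circ t)$ is sound and matches the paper's implicit reasoning (check the generating cones $\cl{P}_k$, $\SPk{k}$, $\CP$ and their compositions with $t$, then note that $\cap$ and $\vee$ preserve the dichotomy, as in the paper's Lemma~\ref{lem:typical_props}); the dual-cone machinery you propose for it is unnecessary overhead but not wrong.
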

\begin{proof}
	The only typical mapping cone not contained in $\cl{P}_2 \vee (\cl{P}_2 \circ t)$ is $\cl{P}$ itself. Since $\Phi \notin \cl{P}_2 \vee (\cl{P}_2 \circ t)$, we only need to show that $\cl{M}_{\{\Phi\}} \neq \cl{P}_1$. To this end, we show that $id \notin \cl{M}_{\{\Phi\}}$. To see why this claim holds, note that $id$ is extreme in the set of positive maps, so if $id \in \cl{M}_{\{\Phi\}}$ there must exist $A,B \in \cl{L}(\cl{H}_n)$ such that $id = {\rm Ad}_A \circ \Phi \circ {\rm Ad}_B$. Since $id$ has full rank as a linear operator, each of $A$ and $B$ must be invertible, so ${\rm Ad}_{A^{-1}} \circ id \circ {\rm Ad}_{B^{-1}} = \Phi$, which is completely positive. Since $\Phi$ is not completely positive, this is a contradiction, so $\cl{M}_{\{\Phi\}}$ must be untypical.
\end{proof}

Note that the results of \cite{Mar10} imply that an extreme positive map is either of the form ${\rm Ad}_A$, ${\rm Ad}_A\circ t$, or it is atomic. Therefore, Theorem \ref{thm:atomic_untypical} applies to all extreme positive maps that are not of the form ${\rm Ad}_A$ or ${\rm Ad}_A\circ t$. 

\subsection{Untypical Mapping Cones Arising from Non-Atomic Positive Maps}\label{sec:non_atomic_map}

As another example of how untypical mapping cones can arise, we present the following (slightly technical) theorem, which is proved via a series of lemmas throughout this section. Note that we use ${\rm supp}(U)$ to denote the support of the operator $U \in \cl{L}(\cl{H}_n)$.
\begin{thm}\label{thm:untypical_from_non_atomic}
	Let $U,V \in \cl{L}(\cl{H}_3)$ have ${\rm rank}(U) = {\rm rank}(V) = 2$. Assume that ${\rm range}(U) = {\rm supp}(U) = {\rm supp}(V)$, ${\rm range}(U)$ and ${\rm range}(V)$ commute, and ${\rm range}(U) \cap {\rm range}(V)$ has dimension $1$. If $\Phi := \Ad{U}+\Ad{V} \notin \cl{P}_2 \circ t$ then $\cl{M}_{\{\Phi\}}$ is untypical.
\end{thm}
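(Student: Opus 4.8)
The plan is to show that $\cl{M}_{\{\Phi\}}$ is untypical by locating it between the typical cones and ruling out equality with each candidate. From Theorem~\ref{thm:atomic_untypical} we already know the strategy: the only typical cone not contained in $\cl{P}_2 \vee (\cl{P}_2 \circ t)$ is $\cl{P}_1$ itself, so the bulk of the argument is to pin down exactly where $\Phi$ sits relative to these cones. First I would observe that since ${\rm rank}(U) = {\rm rank}(V) = 2$, each of $\Ad{U}$ and $\Ad{V}$ is $2$-superpositive, hence $\Phi = \Ad{U} + \Ad{V} \in \SPk{2} \subseteq \cl{P}_2$. Because $\cl{P}_2$ is a typical convex mapping cone, this immediately gives $\cl{M}_{\{\Phi\}} \subseteq \cl{P}_2$. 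The hypothesis $\Phi \notin \cl{P}_2 \circ t$ rules out the other ``atomic-level'' typical cone directly, and the fact that $\Phi \in \cl{P}_2$ (indeed $\SPk{2}$) means $\Phi \neq \cl{P}_1$-generating in the strong sense needed below. So the real content is to show $\cl{M}_{\{\Phi\}}$ is none of $\SPk{1}$, $\SPk{2}$, $\SPk{2}\circ t$, $\cl{P}_2$, or any intersection/join built from these.

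The key reductions I expect are as follows. Since $\cl{M}_{\{\Phi\}} \subseteq \cl{P}_2$ but $\Phi \notin \cl{P}_2 \circ t$, we have $\cl{M}_{\{\Phi\}} \not\subseteq \cl{P}_2 \circ t$, so the only typical cones that $\cl{M}_{\{\Phi\}}$ could equal are those containing $\Phi$ and contained in $\cl{P}_2$, namely $\cl{P}_2$, $\SPk{2}$, and possibly $\SPk{2} \vee (\SPk{2}\circ t)$ or similar joins. To separate $\cl{M}_{\{\Phi\}}$ from $\SPk{2}$ I would exhibit a map in $\SPk{2}$ that is \emph{not} reachable from $\Phi$ by pre- and post-composition with completely positive maps; the geometric hypotheses on $U$ and $V$ (the coincidence of supports, the commuting ranges, and the one-dimensional intersection ${\rm range}(U) \cap {\rm range}(V)$) should force the Choi matrix $C_\Phi$ to have a rigid range structure that is preserved under the separable maps $\sum_i \Ad{A_i \otimes B_i}$, so that not every rank-two Schmidt-number-two operator lies in the orbit. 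To separate $\cl{M}_{\{\Phi\}}$ from $\cl{P}_2$ the cleanest route is to show $\Phi \notin \SPk{2}\circ t \vee \SPk{2}$ is strict or, more to the point, that $\cl{M}_{\{\Phi\}}$ is strictly smaller than $\cl{P}_2$ because $\cl{P}_2$ contains maps (e.g. suitable $2$-positive but not $2$-superpositive maps) whose Choi matrices cannot be obtained from $C_\Phi$ by separable operations. This is where the rank-$2$ and commuting-range hypotheses do the heavy lifting: they constrain $C_\Phi = U U^* + V V^*$ (viewed appropriately under the Choi isomorphism) to a low-dimensional, structurally special operator.

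The main obstacle, I expect, is precisely the separation $\cl{M}_{\{\Phi\}} \neq \cl{P}_2$ and $\cl{M}_{\{\Phi\}} \neq \SPk{2}$ — that is, certifying that certain target maps are genuinely \emph{unreachable} from $\Phi$ under the relevant cone operations, rather than merely plausibly so. Unlike the atomic case, where the single clean invariant ``$id$ is extreme and $\Phi$ is not completely positive'' settled everything, here $\Phi$ lives inside $\SPk{2}$, so one cannot use the completely-positive obstruction. Instead I anticipate needing a finer invariant of the orbit of $C_\Phi$ under separable maps — likely built from the dimension and geometry of ${\rm range}(C_\Phi)$ and its behaviour under local transformations $A \otimes B$ — and then verifying that this invariant distinguishes $\cl{M}_{\{\Phi\}}$ from each typical cone in the lattice generated by $\SPk{2}$ and $\SPk{2}\circ t$. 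Establishing that such a distinguishing invariant is genuinely preserved (an invariance lemma) and that it takes different values on $\cl{M}_{\{\Phi\}}$ versus the typical cones is the technical crux, and it is why the theorem is decomposed into a series of supporting lemmas. Given the hypothesis that the proof proceeds \emph{via a series of lemmas throughout this section}, I would organize these as: (i) $\Phi \in \SPk{2}$; (ii) a structural description of the separable orbit of $C_\Phi$; (iii) strict containment $\cl{M}_{\{\Phi\}} \subsetneq \cl{P}_2$; and (iv) $\cl{M}_{\{\Phi\}} \neq \SPk{2}$, after which untypicality follows by the classification of typical cones contained in $\cl{P}_2$.
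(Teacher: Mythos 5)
Your high-level plan points in the right direction (find a map in $\SPk{2}$ that is unreachable from $\Phi$, then argue no typical cone can behave this way), but the technical crux is left as a hope, and the invariant you propose would not work as stated. You suggest certifying unreachability via a ``rigid range structure'' of $C_\Phi$ that is ``preserved under the separable maps $\sum_i \Ad{A_i \otimes B_i}$.'' No invariant of this kind can control the whole cone: every convex mapping cone contains $\SPk{1}$ (Lemma~2.4 of \cite{S86}, used in Proposition~\ref{prop:sing_map_cone_superpos}), so $\cl{M}_{\{\Phi\}}$ contains maps whose Choi matrices have every possible rank and range, and sums $\sum_i \Ad{A_i\otimes B_i}(C_\Phi)$ destroy any range rigidity. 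The idea that is missing from your sketch, and that makes the paper's proof go through, is to choose the witness to be an \emph{extreme} point of $\cl{P}_1$ --- namely $\Ad{U}$ itself. Extremality collapses the convex sum: if $\Ad{U} \in \cl{M}_{\{\Phi\}}$, then $\Ad{U} = \Ad{A}\circ\Phi\circ\Ad{B} = \Ad{AUB} + \Ad{AVB}$ for a single pair $A,B$, and extremality applied once more forces $\Ad{AUB}$ and $\Ad{AVB}$ to be multiples of $\Ad{U}$, i.e.\ (after rescaling) $AUB = U$ and $AVB = \lambda U$. A short linear-algebra lemma then shows these two equations are incompatible, and it is precisely here that the hypotheses ${\rm rank}(U)={\rm rank}(V)=2$, ${\rm range}(U)={\rm supp}(U)={\rm supp}(V)$, and $\dim({\rm range}(U)\cap{\rm range}(V))=1$ enter --- your proposal never actually uses them. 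Without the extremality reduction, the unreachability claim has no proof.

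The second gap is your endgame: ``untypicality follows by the classification of typical cones contained in $\cl{P}_2$'' is not established anywhere, and your candidate list ($\cl{P}_2$, $\SPk{2}$, and ``similar joins'') is incomplete --- typical cones form the lattice generated by all $\Pk{k}$, $\SPk{k}$, and their compositions with $t$ under $\cap$ and $\vee$, which includes mixed intersections and joins you do not address. The paper avoids enumeration entirely with a dichotomy (Lemma~\ref{lem:typical_props}): every typical cone $\cl{K}$ satisfies $\cl{K}\subseteq\Pk{2}\circ t$ or $\SPk{2}\subseteq\cl{K}$, because this disjunction holds for each generating cone and is preserved by both $\cap$ and $\vee$. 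Then $\cl{M}_{\{\Phi\}}$ violates the first alternative since $\Phi\notin\Pk{2}\circ t$ by hypothesis, and violates the second via the witness $\Ad{U}\in\SPk{2}\setminus\cl{M}_{\{\Phi\}}$, so it is untypical. (Your step (iii), $\cl{M}_{\{\Phi\}}\subsetneq\cl{P}_2$, is redundant once you note $\cl{M}_{\{\Phi\}}\subseteq\SPk{2}$, and in fact the proof needs no containment of $\cl{M}_{\{\Phi\}}$ at all --- only the two non-containments above.) To repair your proposal you would need to supply both the extremality-based unreachability lemma and a lattice-closure argument of this dichotomy type.
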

Before proving Theorem~\ref{thm:untypical_from_non_atomic}, we note that the following operators $U,V \in \cl{L}(\cl{H}_3)$ satisfy all of its hypotheses and thus provide a concrete example of such an untypical mapping cone:
\begin{equation}\label{matrUV}
U:=\begin{bmatrix}1&0&0\\0&1&0\\0&0&0\end{bmatrix},\quad V:=\begin{bmatrix}0&1&0\\0&0&0\\1&0&0\end{bmatrix}.
\end{equation}
The corresponding map $\Phi:=\Ad{U}+\Ad{V}$ acts as follows:
\begin{align*}
	(x_{ij}) \mapsto \begin{bmatrix}x_{11} + x_{22} & x_{12} & x_{21} \\ x_{21} & x_{22} & 0 \\ x_{12} & 0 & x_{11}\end{bmatrix}.
\end{align*}
The only slightly non-trivial property of $U$, $V$, and $\Phi$ that needs to be checked is that $\Phi \notin \cl{P}_2 \circ t$. To see this, one can verify that
\begin{equation}
\left(id_2 \otimes \Phi\circ t\right)\left(\begin{bmatrix}1&0&0&0&1&0\\0&0&0&0&0&0\\0&0&0&0&0&0\\0&0&0&0&0&0\\1&0&0&0&1&0\\0&0&0&0&0&0\end{bmatrix}\right)=\begin{bmatrix}1&0&0&0&0&1\\0&0&0&1&0&0\\0&0&1&0&0&0\\0&1&0&1&0&0\\0&0&0&0&1&0\\1&0&0&0&0&0\end{bmatrix}\not\geq 0,
\end{equation}
so $\Phi$ is not an element of $\Pk{2}\circ t$, as desired.

We now prove Theorem~\ref{thm:untypical_from_non_atomic} via a series of lemmas.
\begin{lemma}\label{lem:ad_aub_avb}
	Under the hypotheses of Theorem~\ref{thm:untypical_from_non_atomic}, there do not exist $\lambda \in \bb{C}$ and $A,B \in \cl{L}(\cl{H}_3)$ such that $AUB = U$ and $AVB = \lambda U$.
\end{lemma}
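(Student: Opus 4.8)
The plan is to work directly with the structure of $U$ and $V$ and derive a contradiction from the assumed existence of $\lambda$, $A$, $B$. Suppose $AUB = U$ and $AVB = \lambda U$. The first equation constrains how $A$ and $B$ interact with $\mathrm{range}(U)$ and $\mathrm{supp}(U)$; since $\mathrm{range}(U) = \mathrm{supp}(U)$ is a fixed $2$-dimensional subspace and $AUB = U$, the operator $A$ must act invertibly on $\mathrm{range}(U)$ when restricted appropriately, and $B$ must preserve $\mathrm{supp}(U)$ in the right sense. The key tension is that $\mathrm{range}(V)$ is a \emph{different} $2$-dimensional subspace meeting $\mathrm{range}(U)$ in only a line, yet $AVB = \lambda U$ would force $A$ to send $\mathrm{range}(AVB) \subseteq \mathrm{range}(U)$, collapsing part of $\mathrm{range}(V)$ into $\mathrm{range}(U)$.

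First I would use $\mathrm{supp}(U) = \mathrm{supp}(V)$ to observe that both $U$ and $V$ annihilate the same one-dimensional kernel, so $AUB = U$ and $AVB = \lambda U$ are really equations about the actions on the common $2$-dimensional support. On that support, $U$ is invertible as a map onto $\mathrm{range}(U)$, so $AUB = U$ pins down the relationship between the restrictions of $A$ and $B$; specifically, writing $U$ as an isomorphism from $\mathrm{supp}(U)$ to $\mathrm{range}(U)$, invertibility of $U$ on its support means $B$ restricted to $\mathrm{supp}(U)$ and $A$ restricted to $\mathrm{range}(U)$ are forced to be inverse to each other up to the action of $U$. I would then substitute this forced relationship into the second equation $AVB = \lambda U$ to obtain a rigid constraint on $V$ alone, of the form $V = \lambda U B^{-1} U^{-1} B$ or similar, which I can analyze using the commutativity of $\mathrm{range}(U)$ and $\mathrm{range}(V)$.

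The main obstacle, and the crux of the argument, is exploiting the geometric hypothesis that $\mathrm{range}(U) \cap \mathrm{range}(V)$ has dimension exactly $1$ together with the commutativity of the two ranges. Commutativity forces $\mathrm{range}(U)$ and $\mathrm{range}(V)$ to be simultaneously diagonalizable (as projections or as the spaces they define), so there is a common eigenbasis in which both ranges are coordinate subspaces intersecting in a single coordinate line. The contradiction should emerge because $AVB = \lambda U$ demands that $A$ map all of $\mathrm{range}(V)$ into $\mathrm{range}(U)$, but the derived form of $A$ (inverse to $B$ on the support via $U$) cannot simultaneously fix $\mathrm{range}(U)$ and fold the transverse part of $\mathrm{range}(V)$ into $\mathrm{range}(U)$ unless $\lambda = 0$, which would make $AVB = 0$ and hence $V = 0$, contradicting $\mathrm{rank}(V) = 2$.

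I expect the cleanest route is to pass to a concrete basis adapted to the common eigendecomposition guaranteed by commutativity, reducing the abstract operator equations to small matrix equations, and then to check directly that no nonzero $\lambda$ and invertible restrictions can satisfy both simultaneously. The delicate point will be handling the kernels: since $U$ and $V$ are rank $2$ in a $3$-dimensional space, $A$ and $B$ need not be invertible, so I must argue carefully on the relevant subspaces rather than globally, and confirm that the forced rank-$1$ overlap genuinely obstructs the second equation.
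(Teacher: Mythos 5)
There is a genuine gap here --- in fact two concrete failures, plus an asserted-but-underived core. First, your endgame for $\lambda = 0$ is wrong: from $AVB = 0$ you conclude ``hence $V = 0$,'' but that inference requires $A$ and $B$ to be invertible, which --- as you yourself flag in your final paragraph --- is exactly what you may not assume. The correct handling (and the paper's) is: after reducing to the case where $A$ is invertible, $AVB = 0$ gives $VB = 0$, so ${\rm range}(B) \subseteq {\rm ker}(V)$, which is one-dimensional since ${\rm rank}(V) = 2$ in dimension $3$; hence ${\rm rank}(B) \leq 1$, contradicting ${\rm rank}(AUB) = {\rm rank}(U) = 2$. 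Second, your pivotal algebraic step, extracting ``$V = \lambda U B^{-1} U^{-1} B$ or similar,'' is not available: $U^{-1}$ does not exist ($U$ has rank $2$), and invertibility of $B$ is unjustified. Replacing these by pseudo-inverses restricted to ${\rm supp}(U)$ and ${\rm range}(U)$ is precisely the delicate point of the whole lemma, and your sketch defers it rather than resolving it.

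More fundamentally, the contradiction you predict for $\lambda \neq 0$ is asserted, not derived, and as stated it is not an obstruction at all: nothing prevents an operator $A$ from acting bijectively on ${\rm range}(U)$ while also mapping the two-dimensional ${\rm range}(V)$ onto ${\rm range}(U)$ --- a generic invertible $A$ does both. The paper's actual mechanism runs through $B$, not $A$: compress $B$ to ${\rm supp}(U)B\,{\rm supp}(U)$ (harmless because ${\rm supp}(U) = {\rm supp}(V)$, so the discarded part is killed by both $U$ and $V$), deduce ${\rm range}(VB) = {\rm range}(VU) = {\rm range}(V)$, reduce to invertible $A$, and then eliminate $A$ from the pair of equations ($UB = A^{-1}U$ and $VB = \lambda A^{-1}U$) to obtain $VB = \lambda UB$. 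This forces ${\rm range}(VB)$ into ${\rm range}(U) \cap {\rm range}(V)$, which is one-dimensional --- here, and only here, do the commutativity of the range projections and the dimension-one intersection enter --- so ${\rm rank}(AVB) \leq 1 < 2 = {\rm rank}(\lambda U)$. Note also that commutativity simultaneously diagonalizes only the two range \emph{projections}; $U$ and $V$ themselves need not be normal (see the example in Equation~\eqref{matrUV}), so your plan of passing to a common eigenbasis and ``checking directly'' still leaves $U$, $V$, $A$, $B$ with many free parameters and no finite verification. To complete your proof you would need to (i) justify the compression of $B$ and the reduction to invertible $A$, (ii) derive $VB = \lambda UB$, and (iii) run the range count through the one-dimensional intersection --- at which point you will have reconstructed the paper's argument.
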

\begin{proof}
	Since $U = AUB$, ${\rm range}(B) \supseteq {\rm supp}(U)$ and ${\rm supp}(B) \supseteq {\rm supp}(U)$. The same formulas hold if $B$ is replaced by ${\rm supp}(U)B{\rm supp}(U)$ which equals ${\rm supp}(V)B{\rm supp}(U)$ by hypothesis. Thus
	\begin{align}\label{eq:vb_range_vu}
		{\rm range}(VB) = {\rm range}(V{\rm range}(V)B{\rm supp}(U)) = {\rm range}(V{\rm supp}(U)) = {\rm range}(VU).
	\end{align}
	We can thus replace $A$ by $A{\rm range}(U)$ and thus assume $A$ is invertible.
	
	Then $UB = A^{-1}U$ and $VB = \lambda A^{-1}U$, so $VB = \lambda UB$. By Equation~\eqref{eq:vb_range_vu} it follows that ${\rm range}(VB) \subseteq {\rm range}(U){\rm range}(V)$. Then ${\rm rank}(AVB) \leq 1$, which contradicts the fact that ${\rm rank}(U) = 2$, except when $\lambda = 0$. In this case, $VB = 0$. Thus ${\rm range}(B) \subseteq {\rm ker}(V)$, which has dimension $1$, by hypothesis. Then ${\rm rank}(B) \leq 1$, which is impossible since $AUB = U$, which has rank $2$.
\end{proof}

\begin{lemma}\label{lem:adu_notin_mphi}
	Under the hypotheses of Theorem~\ref{thm:untypical_from_non_atomic}, $\Ad{U} \notin \cl{M}_{\{\Phi\}}$.
\end{lemma}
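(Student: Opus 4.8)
The plan is to work at the level of Choi matrices and exploit the fact that $C_{\Ad{U}}$ has rank one. First I would recall that every $\Phi_i,\Psi_i\in\cl{CP}$ is a sum of maps of the form $\Ad{A}$, so a generic element of $\cl{M}_{\{\Phi\}}$ can be written as $\sum_i \Ad{A_i}\circ\Phi\circ\Ad{B_i}=\sum_i\big(\Ad{A_iUB_i}+\Ad{A_iVB_i}\big)$ for suitable $A_i,B_i\in\cl{L}(\cl{H}_3)$. Writing $\mathbf{w}_A:=\sum_j\mathbf{e}_j\otimes A\mathbf{e}_j$, a direct computation gives $C_{\Ad{A}}=\mathbf{w}_A\mathbf{w}_A^*$, so that the Choi matrix of such an element is $\sum_i\big(\mathbf{w}_{A_iUB_i}\mathbf{w}_{A_iUB_i}^*+\mathbf{w}_{A_iVB_i}\mathbf{w}_{A_iVB_i}^*\big)$, a sum of positive rank-one terms, whereas $C_{\Ad{U}}=\mathbf{w}_U\mathbf{w}_U^*$ is itself rank one.

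Assuming for contradiction that $\Ad{U}\in\cl{M}_{\{\Phi\}}$, I would use the fact that a sum of positive rank-one operators equal to a rank-one operator $\mathbf{w}_U\mathbf{w}_U^*$ must consist of terms whose ranges all lie in $\linspan\{\mathbf{w}_U\}$. Hence each $\mathbf{w}_{A_iUB_i}$ and $\mathbf{w}_{A_iVB_i}$ is a scalar multiple of $\mathbf{w}_U$; since $A\mapsto\mathbf{w}_A$ is linear and injective, this yields scalars with $A_iUB_i=\mu_iU$ and $A_iVB_i=\nu_iU$ for all $i$, and the sum being nonzero forces some index $i$ with $(\mu_i,\nu_i)\neq(0,0)$. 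If $\mu_i\neq0$ for some such $i$, I would rescale $A_i$ by $1/\mu_i$ to obtain $A_iUB_i=U$ and $A_iVB_i=(\nu_i/\mu_i)U$, which directly contradicts Lemma~\ref{lem:ad_aub_avb} with $\lambda=\nu_i/\mu_i$.

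The remaining, and I expect main, obstacle is the degenerate case in which $\mu_i=0$ for every $i$; then $A_iUB_i=0$ for all $i$, while some index gives (after rescaling) $AUB=0$ and $AVB=U$. Here Lemma~\ref{lem:ad_aub_avb} does not apply, since $U$ and $V$ enter the hypotheses of Theorem~\ref{thm:untypical_from_non_atomic} asymmetrically, so I would argue directly from the support structure. The key observation is that $\ker(U)=\ker(V)=\mathrm{range}(U)^\perp$, because $\mathrm{supp}(U)=\mathrm{supp}(V)=\mathrm{range}(U)$; consequently $\ker(UB)=B^{-1}(\ker U)=B^{-1}(\ker V)=\ker(VB)$, so $UB$ and $VB$ have equal rank. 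Since $AVB=U$ has rank $2$, necessarily $\mathrm{rank}(VB)=2=\mathrm{rank}(UB)$, whence $\mathrm{range}(UB)=\mathrm{range}(U)$ and $\mathrm{range}(VB)=\mathrm{range}(V)$. Then $AUB=0$ forces $\mathrm{range}(U)\subseteq\ker(A)$, while $A\big(\mathrm{range}(V)\big)=\mathrm{range}(AVB)=\mathrm{range}(U)$ has dimension $2$, so $A$ is injective on $\mathrm{range}(V)$, i.e.\ $\mathrm{range}(V)\cap\ker(A)=\{0\}$. Choosing a nonzero $\mathbf{w}\in\mathrm{range}(U)\cap\mathrm{range}(V)$, guaranteed by the one-dimensional intersection hypothesis, gives $\mathbf{w}\in\ker(A)$ yet $\mathbf{w}\in\mathrm{range}(V)\cap\ker(A)=\{0\}$, a contradiction. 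This rules out the degenerate case and completes the argument that $\Ad{U}\notin\cl{M}_{\{\Phi\}}$.
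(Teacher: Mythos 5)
Your proposal is correct, and it reaches Lemma~\ref{lem:ad_aub_avb} by a genuinely different route than the paper. The paper's proof applies extremality of $\Ad{U}$ in $\cl{P}_1$ twice: once to collapse a general element $\sum_i \Ad{A_i}\circ\Phi\circ\Ad{B_i}$ of $\cl{M}_{\{\Phi\}}$ to a single term $\Ad{A}\circ\Phi\circ\Ad{B}$, and once more to split $\Ad{AUB}+\Ad{AVB}$ into nonnegative multiples of $\Ad{U}$; it then normalizes to $AUB=U$, $AVB=\lambda U$ and invokes Lemma~\ref{lem:ad_aub_avb}, exactly as in your main case. Your replacement of extremality by the Choi-matrix identity $C_{\Ad{A}}=\mathbf{w}_A\mathbf{w}_A^*$ together with the fact that each positive semidefinite summand of the rank-one operator $\mathbf{w}_U\mathbf{w}_U^*$ has range inside $\linspan\{\mathbf{w}_U\}$ is self-contained linear algebra, whereas extremality of $\Ad{U}$ among positive maps is a nontrivial fact the paper imports without proof; both mechanisms deliver the same system $A_iUB_i=\mu_iU$, $A_iVB_i=\nu_iU$. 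More notably, your handling of the degenerate case makes your argument slightly more complete than the published one: the paper asserts coefficients $\alpha,\beta>0$, but extremality only yields $\alpha,\beta\geq0$, and the case $\alpha=0$ (that is, $AUB=0$ and $AVB=U$ after rescaling) is not covered by Lemma~\ref{lem:ad_aub_avb}, whose hypothesis $AUB=U$ is essential (its $\lambda=0$ branch handles $AVB=0$, not $AUB=0$). Your support argument disposing of this case checks out: ${\rm supp}(U)={\rm supp}(V)$ gives $\ker U=\ker V$, hence $\ker(UB)=\ker(VB)$ and ${\rm rank}(UB)={\rm rank}(VB)=2$, so ${\rm range}(UB)={\rm range}(U)$ and ${\rm range}(VB)={\rm range}(V)$; then $AUB=0$ forces ${\rm range}(U)\subseteq\ker A$, while $AVB=U$ forces $A$ to be injective on the two-dimensional subspace ${\rm range}(V)$, and a nonzero vector in the one-dimensional intersection ${\rm range}(U)\cap{\rm range}(V)$ would have to lie in ${\rm range}(V)\cap\ker A=\{0\}$, a contradiction. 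In short: same endgame via Lemma~\ref{lem:ad_aub_avb}, a more elementary reduction step, and an extra case correctly settled that the paper passes over.
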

\begin{proof}
	Assume that $\Ad{U} \in \cl{M}_{\{\Phi\}}$. Because $\Ad{U}$ is an extremal element of the cone $\cl{P}_1$, it is also extremal in $\cl{M}_{\{\Phi\}}$. Hence there exist $A,B \in \cl{L}(\cl{H}_n)$ such that
	\begin{align*}
		\Ad{U} = \Ad{A} \circ \Ad{U} \circ \Ad{B} + \Ad{A} \circ \Ad{V} \circ \Ad{B} = \Ad{AUB} + \Ad{AVB}.
	\end{align*}
	By using extremality of $\Ad{U}$ in $\cl{P}_1$ again, we see that there exist $\alpha,\beta > 0$ such that
	\begin{align*}
		\Ad{AUB} = \alpha \Ad{U} \text{ and } \Ad{AVB} = \beta \Ad{U}.
	\end{align*}
	It follows that there exist $x,y \in \bb{C}$ with $|x| = |y| = 1$ such that $AUB = x\sqrt{\alpha} U$ and $AVB = y\sqrt{\beta} U$. By absorbing the constant $1/(x\sqrt{\alpha})$ into $A$ and defining $\lambda := y\sqrt{\beta}/(x\sqrt{\alpha})$, we can reduce this system of equalities slightly to $AUB = U$, $AVB = \lambda U$. By Lemma~\ref{lem:ad_aub_avb}, this gives a contradiction and proves the result.
\end{proof}

It is worth observing that Lemmas~\ref{lem:ad_aub_avb} and~\ref{lem:adu_notin_mphi} generalize slightly to the case where $U,V \in \cl{L}(\cl{H}_n)$ have ${\rm rank}(U) = {\rm rank}(V) = k > n/2$ and ${\rm range}(U) \cap {\rm range}(V)$ has dimension at most $k-1$. The lemmas as-stated arise in the $n = 3$, $k = 2$ case, which is the case of interest to us.
\begin{lemma}\label{lem:mphi_props}
	Under the hypotheses of Theorem~\ref{thm:untypical_from_non_atomic}, we have the following:
	\begin{enumerate}[(1)]
		\item $\cl{M}_{\{\Phi\}}\not\subseteq\Pk{2}\circ t$, and
		\item $\SPk{2}\not\subseteq \cl{M}_{\{\Phi\}}$.
	\end{enumerate}
\end{lemma}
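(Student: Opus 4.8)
The plan is to dispatch both parts quickly, since the substantive work has already been carried out in the preceding lemmas. For part~(1), I would observe that $\Phi$ itself lies in $\cl{M}_{\{\Phi\}}$: taking $\Omega_1 = \Phi$ and $\Phi_1 = \Psi_1 = id$ in the definition of the generated cone is legitimate because $id \in \cl{CP}$ (as used in Proposition~\ref{prop:sing_map_cone_id}). The hypothesis of Theorem~\ref{thm:untypical_from_non_atomic} already stipulates that $\Phi \notin \Pk{2} \circ t$. Hence $\Phi$ is an element of $\cl{M}_{\{\Phi\}}$ that does not belong to $\Pk{2} \circ t$, which immediately yields $\cl{M}_{\{\Phi\}} \not\subseteq \Pk{2} \circ t$.

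For part~(2), I would exhibit a single $2$-superpositive map lying outside $\cl{M}_{\{\Phi\}}$, and the natural candidate is $\Ad{U}$. Since ${\rm rank}(U) = 2$, the map $\Ad{U}$ is $2$-superpositive directly from the definition of $\SPk{k}$ (it is $\Ad{A}$ with the single operator $A = U$ of rank $2$), so $\Ad{U} \in \SPk{2}$. On the other hand, Lemma~\ref{lem:adu_notin_mphi} establishes exactly that $\Ad{U} \notin \cl{M}_{\{\Phi\}}$. Combining these two facts produces a witness in $\SPk{2} \setminus \cl{M}_{\{\Phi\}}$, so $\SPk{2} \not\subseteq \cl{M}_{\{\Phi\}}$.

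I do not expect either part to present a genuine obstacle, because the difficulty has been front-loaded into Lemmas~\ref{lem:ad_aub_avb} and~\ref{lem:adu_notin_mphi}. The rank-and-range argument of Lemma~\ref{lem:ad_aub_avb} (ruling out the existence of $\lambda, A, B$ with $AUB = U$ and $AVB = \lambda U$) and the extremality argument of Lemma~\ref{lem:adu_notin_mphi} together supply the only nontrivial ingredient, namely that $\Ad{U}$ cannot be reached from $\Phi$ by completely positive pre- and post-composition. The present lemma is then merely a matter of recording that $\Phi$ witnesses part~(1) and that $\Ad{U}$ witnesses part~(2); the one small check worth stating explicitly is $\Ad{U} \in \SPk{2}$, which follows from ${\rm rank}(U) = 2$.
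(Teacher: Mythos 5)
Your proof is correct and follows exactly the paper's own argument: part~(1) is witnessed by $\Phi \in \cl{M}_{\{\Phi\}} \setminus (\Pk{2}\circ t)$ using the hypothesis of Theorem~\ref{thm:untypical_from_non_atomic}, and part~(2) by $\Ad{U} \in \SPk{2} \setminus \cl{M}_{\{\Phi\}}$ via Lemma~\ref{lem:adu_notin_mphi}. You merely spell out two small steps the paper leaves implicit (that $\Phi$ itself lies in the generated cone, and that ${\rm rank}(U)=2$ gives $\Ad{U}\in\SPk{2}$), which is fine.
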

\begin{proof}
 The first fact is true because $\Phi\not\in\Pk{2}\circ t$ (by hypothesis). The second fact is a consequence of Lemma~\ref{lem:adu_notin_mphi} and $\Ad{U}\in\SPk{2}$.
\end{proof}
To show that $\cl{M}_{\{\Phi\}}$ is not typical, it is now sufficient to prove the following.
\begin{lemma}\label{lem:typical_props}
	If $\mathcal{K}$ is a typical mapping cone then at least one of the following conditions holds:
	\begin{enumerate}[(1)]
		\item $\mathcal{K}\subseteq\Pk{2}\circ t$, or
		\item $\SPk{2}\subseteq\mathcal{K}$.
	\end{enumerate}
\begin{proof}
	For the cones $\left\{\Pk{k},\SPk{k},\CP,\Pk{k}\circ t,\SPk{k}\circ t,\CP\circ t\right\}_{k=1}^n$, the assertion of the lemma is clearly true. It suffices to note that the operations $\cap$ and $\vee$ preserve the disjunction of conditions $(1)$ and $(2)$.
\end{proof}
\end{lemma}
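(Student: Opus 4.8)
The plan is to argue by structural induction on the way a typical cone $\mathcal{K}$ is built up. By definition, $\mathcal{K}$ is obtained from the generating cones $\{\Pk{k},\SPk{k},\CP\}_{k=1}^n$, together with their images under composition with the transpose map $t$, by repeatedly applying the two binary operations $\cap$ and $\vee$. Writing $P(\mathcal{K})$ for the assertion that condition~(1) or condition~(2) holds, I would first verify $P$ on every generating cone, and then show that $\cap$ and $\vee$ both preserve $P$; the lemma then follows by induction on the number of operations used to build $\mathcal{K}$.

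For the base cones I would lean entirely on the inclusion chain $\SPk{1}\subseteq\SPk{2}\subseteq\cdots\subseteq\CP\subseteq\cdots\subseteq\Pk{2}\subseteq\Pk{1}$. Every cone lying above $\SPk{2}$ in this chain satisfies~(2): this covers all of the $\Pk{k}$ (since $\Pk{k}\supseteq\CP\supseteq\SPk{2}$), the cones $\SPk{k}$ with $k\ge 2$, and $\CP$ itself. Dually, every cone of the form $\mathcal{N}\circ t$ with $\mathcal{N}\subseteq\Pk{2}$ satisfies~(1); since $\SPk{k}\subseteq\CP\subseteq\Pk{2}$ and $\Pk{k}\subseteq\Pk{2}$ for $k\ge 2$, this disposes of $\SPk{k}\circ t$ (all $k$), $\CP\circ t$, and $\Pk{k}\circ t$ for $k\ge 2$. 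Finally $\Pk{1}\circ t=\Pk{1}$, because $t$ maps the positive cone bijectively onto itself, so this cone again satisfies~(2).

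The one generating cone that resists both easy arguments is $\SPk{1}$: it does not contain $\SPk{2}$, so~(2) fails, and I must instead establish~(1), i.e.\ $\SPk{1}\subseteq\Pk{2}\circ t$. This is the main (if modest) obstacle. I would resolve it using the fact that superpositive maps are completely co-positive: a map in $\SPk{1}$ has separable Choi matrix, the partial transpose of a separable operator is again separable and hence positive, and therefore $\SPk{1}$ is invariant under composition with $t$. Consequently $\SPk{1}=\SPk{1}\circ t\subseteq\CP\circ t\subseteq\Pk{2}\circ t$, which is exactly~(1).

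For the inductive step I would exploit the monotonicity of the two operations. Suppose $P(\mathcal{K}_1)$ and $P(\mathcal{K}_2)$ hold. Since $\mathcal{K}_1\cap\mathcal{K}_2\subseteq\mathcal{K}_i$, if either factor satisfies~(1) then so does the intersection; otherwise both factors satisfy~(2), so $\SPk{2}$ lies in each and hence in $\mathcal{K}_1\cap\mathcal{K}_2$. Dually, since $\mathcal{K}_i\subseteq\mathcal{K}_1\vee\mathcal{K}_2$ (each cone contains $0$), if either factor satisfies~(2) then so does the join; otherwise both satisfy~(1), and because $\Pk{2}$ is a convex cone, so that $\Pk{2}\circ t$ is closed under addition, the join is again contained in $\Pk{2}\circ t$. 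In every case $P$ is inherited by $\mathcal{K}_1\cap\mathcal{K}_2$ and $\mathcal{K}_1\vee\mathcal{K}_2$, which closes the induction and proves the lemma.
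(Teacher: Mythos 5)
Your proposal is correct and takes essentially the same route as the paper, whose proof simply states the two steps you carry out in detail: verify the disjunction on the base cones $\left\{\Pk{k},\SPk{k},\CP,\Pk{k}\circ t,\SPk{k}\circ t,\CP\circ t\right\}_{k=1}^n$ and note that $\cap$ and $\vee$ preserve it. Your explicit treatment of the one nontrivial base case $\SPk{1}$ (via $\SPk{1}\circ t=\SPk{1}$, so $\SPk{1}\subseteq\CP\circ t\subseteq\Pk{2}\circ t$) and your tacit normalization pushing $\circ t$ to the leaves (justified because $\circ t$ is an involution that distributes over $\cap$ and $\vee$) merely fill in details the paper leaves unstated.
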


It follows from comparing Lemmas~\ref{lem:mphi_props} and~\ref{lem:typical_props} that the mapping cone $\cl{M}_{\{\Phi\}}$ is untypical, which proves Theorem~\ref{thm:untypical_from_non_atomic}.

\subsection{Untypical Mapping Cones Within 2-Superpositives}\label{sec:untypical_s2_s2t}

In Section~\ref{sec:atomic_map} we saw that there are many untypical mapping cones that are not contained within $\cl{P}_2 \cup (\cl{P}_2 \circ t)$. In Section~\ref{sec:non_atomic_map} we then saw untypical mapping cones contained within $\SPk{2}$ but not contained in $\cl{P}_2 \circ t$. We now demonstrate that there are also many untypical mapping cones contained in $\SPk{2} \cap (\SPk{2} \circ t)$. 
First, it will be useful to prove the following lemma, similar to Lemma \ref{lem:typical_props} above.

\begin{lemma}\label{lem:SP2SP2t}
If $\mathcal{K}$ is a typical mapping cone, it must satisfy one of the following two properties
\begin{enumerate}[(1)]
\item $\mathcal{K}=\SPk{1}$
\item $\SPk{2}\cap\SPk{2}\circ t\subset\mathcal{K}$
\end{enumerate}
\begin{proof}
For the cones $\left\{\Pk{k},\SPk{k},\CP,\Pk{k}\circ t,\SPk{k}\circ t,\CP\circ t\right\}_{k=1}^n$, the assertion of the lemma is clearly true. Similarly as in Lemma \ref{lem:typical_props}, the operations $\cap$ and $\vee$ preserve the disjunction of conditions $(1)$ and $(2)$.
\end{proof}
\end{lemma}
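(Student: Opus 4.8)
The plan is to argue by structural induction on the recursive construction of the typical cone $\mathcal{K}$, exactly as in the proof of Lemma~\ref{lem:typical_props}. Write $P(\mathcal{M})$ for the disjunctive property ``$\mathcal{M} = \SPk{1}$, or $\SPk{2}\cap(\SPk{2}\circ t)\subseteq\mathcal{M}$.'' Since every typical cone is obtained from the generators $\{\Pk{k},\SPk{k},\CP,\Pk{k}\circ t,\SPk{k}\circ t,\CP\circ t\}_{k=1}^n$ by finitely many applications of $\cap$ and $\vee$, it suffices to check that $P$ holds for each generator and that $P$ is preserved by both operations.

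For the base case, recall the chain of inclusions $\SPk{1}\subseteq\SPk{2}\subseteq\cdots\subseteq\CP\subseteq\cdots\subseteq\Pk{1}$ together with their transposed counterparts. Every generator other than $\SPk{1}$ and $\SPk{1}\circ t$ contains either $\SPk{2}$ or $\SPk{2}\circ t$ (using $\SPk{2}\subseteq\CP\subseteq\Pk{k}$ for all $k$), and hence contains $\SPk{2}\cap(\SPk{2}\circ t)$, so these generators satisfy condition $(2)$. The two remaining generators satisfy condition $(1)$: the cone $\SPk{1}$ does so trivially, and for $\SPk{1}\circ t$ I would use that the Choi matrix of $\Phi\circ t$ is the partial transpose of $C_\Phi$, so that $\SPk{1}\circ t = \SPk{1}$ because partial transposition preserves separability. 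Thus $P$ holds on every generator, with only the two smallest generators falling under alternative $(1)$.

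The inductive step for $\vee$ is immediate: if either factor satisfies $(2)$ then so does the sum, since $\mathcal{K}_1\vee\mathcal{K}_2\supseteq\mathcal{K}_i$; and if both factors equal $\SPk{1}$ then $\SPk{1}\vee\SPk{1}=\SPk{1}$, giving $(1)$. For $\cap$, when both factors satisfy $(2)$ the intersection again contains $\SPk{2}\cap(\SPk{2}\circ t)$. The one case requiring care --- and the step I expect to be the main obstacle --- is an intersection in which some factor, say $\mathcal{K}_1$, equals $\SPk{1}$. Here I would invoke the fact (Lemma~2.4 of \cite{S86}, already used in the proof of Proposition~\ref{prop:sing_map_cone_superpos}) that every convex mapping cone contains $\SPk{1}$; hence $\mathcal{K}_2\supseteq\SPk{1}$ and therefore $\SPk{1}\cap\mathcal{K}_2=\SPk{1}$, so the intersection satisfies $(1)$. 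This exhausts all cases and shows that $P(\mathcal{K})$ holds for every typical $\mathcal{K}$, which is the assertion of the lemma.
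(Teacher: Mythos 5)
Your proof is correct and takes essentially the same route as the paper's: verify the disjunction of (1) and (2) on the generating cones $\left\{\Pk{k},\SPk{k},\CP,\Pk{k}\circ t,\SPk{k}\circ t,\CP\circ t\right\}_{k=1}^n$ and then check that $\cap$ and $\vee$ preserve it. The details you fill in that the paper leaves as ``clearly true'' --- the identity $\SPk{1}\circ t=\SPk{1}$ via partial transposition of the Choi matrix, and the appeal to Lemma~2.4 of \cite{S86} (every convex mapping cone contains $\SPk{1}$) to resolve intersections with $\SPk{1}$ --- are exactly the intended ones.
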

To show that there exists a large family of untypical mapping cones contained in $\SPk{2}\cap\SPk{2}\circ t$, let us recall the results of a recent paper \cite{S2011} by one of the authors. In the paper, a family of extreme PPT states of rank $4$ in $3\times 3$ systems was characterized by their $\textnormal{SL}\left(3,\mathbbm{C}\right)\otimes\textnormal{SL}\left(3,\mathbbm{C}\right)$ equivalence to projections onto orthogonal complements of orthonormal unextendible product bases (UPBs), cf. also \cite{Bennett99,LS2010}.
\begin{defn}[Unextendible product basis]\label{defn:UPB}
A set of product vectors $\left\{u_i\otimes v_i\right\}_{i=1,2,\ldots,k}\subset\mathbbm{C}^m\otimes\mathbbm{C}^n$, $k\leqslant mn$ is called an (orthogonal) unextendible product basis (UPB) if the vectors $\phi_i\otimes\psi_i$ are mutually orthogonal and there is no additional product vector, orthogonal to all of them. We call a UPB orthonormal if the vectors $\phi_i\otimes\psi_i$ are normalized. 
\end{defn}

In the following, we shall prove that the inverse Jamio{\l}kowski--Choi transforms of all the PPT states considered in \cite{S2011} generate untypical mapping cones. Here, by the Jamio{\l}kowski--Choi transformation we mean the map $J:\Phi\mapsto C_{\Phi}$, where $C_{\Phi}$ is the Choi matrix of $\Phi$. We recall \cite{S2011} that an entangled PPT state $\rho$ of rank four in $3\times 3$ systems must have six product vectors $\left\{\phi_i\otimes\psi_i\right\}_{i=1}^6$ in its kernel. It was proved in \cite{S2011} that some quintuple, chosen from the six product vectors, span ${\rm ker}(\rho)$ and thus they must form a general unextendible product basis (gUPB), i.e. there is no product vector in their orthogonal complement. Without loss of generality, we may assume that the five product vectors are $\phi_i\otimes\psi_i$ for $i=1,2,\ldots,5$. The gUPB condition for $\left\{\phi_i\otimes\psi_i\right\}_{i=1}^5$ can be reformulated by saying that in $\left\{\phi_i\right\}_{i=1}^5$ and $\left\{\psi_i\right\}_{i=1}^5$, every triple of vectors is linearly independent. For our purposes, it will be useful to prove that, in fact, any quintuple of vectors in $\left\{\phi_i\otimes\psi_i\right\}_{i=1}^6$ actually form a gUPB. First of all, any basis of ${\rm ker}(\rho)$, consisting of product vectors, must be a gUPB, as the range of $\rho$, equal to ${\rm ker}(\rho)^{\bot}$, cannot contain a product vector, cf. Corollary 3.12 in \cite{S2011}. Since we assumed that the vectors $\left\{\phi_i\otimes\psi_i\right\}_{i=1}^5$ span ${\rm ker}(\rho)$ and $\phi_6\otimes\psi_6\in{\rm ker}(\rho)$, we must have
\begin{equation}
\phi_6\otimes\psi_6=\sum_{i=1}^5\lambda_i\phi_i\otimes\psi_i
\end{equation}
for some $\lambda_i\in\mathbbm{C}$, $i=1,2,\ldots,5$. Actually, we can prove that $\lambda_i\neq 0$ for all $i$. In \cite{S2011}, it was shown that $\lambda_4\neq 0$, for which the assumption that $\left\{\phi_i\otimes\psi_i\right\}_{i=1}^5$ is a gUPB, as well as Lemma 3.3 of \cite{S2011} was used. Neither the gUPB property nor the lemma depend on the ordering of the vectors $\phi_i\otimes\psi_i$, $i=1,2,\ldots,5$. Hence $\lambda_4\neq 0$ implies that $\lambda_i\neq 0$ for all $i=1,2,\ldots,5$. As a consequence, each of five-element subsets of $\left\{\phi_i\otimes\psi_i\right\}_{i=1}^6$ must span $\textnormal{ker}\rho$. Therefore each of them has to be a gUPB.

Let $\left\{\phi_{i_j}\otimes\psi_{i_j}\right\}_{j=1}^5$ be five arbitrary product vectors in the kernel of $\rho$. By repeating the argument of \cite[Sections E and F]{S2011}, one can show that there exists precisely one $\textnormal{SL}\left(3,\mathbbm{C}\right)\otimes\textnormal{SL}\left(3,\mathbbm{C}\right)$ transformation $\rho\mapsto\textnormal{Ad}_{A\otimes B}(\rho)=\left(A\otimes B\right)^{\ast}\rho\left(A\otimes B\right)$ that brings $\rho$ to the form
\begin{equation}
\chi\left(I-\sum_{j=1}^5\left|\tilde\phi_j\otimes\tilde\psi_j\right>\left<\tilde\phi_j\otimes\tilde\psi_j\right|\right),
\end{equation}
where $\chi>0$ and the vectors
\begin{equation}
\tilde\phi_j\otimes\tilde\psi_j=\chi_j\left(A^{-1}\phi_{i_j}\right)\otimes\left(B^{-1}\psi_{i_j}\right),
\end{equation}
$\chi_j\in\mathbbm{C}\forall_j$, form an orthonormal UPB. This leads us to the following

\begin{prop}\label{prop:onlysix}
Let $\rho$ be an entangled PPT state of rank $4$ in a $3\times 3$ system. There exist precisely six transformations 
\begin{equation}\label{eq:transformAdAB}
\rho\mapsto\textnormal{Ad}_{A\otimes B}(\rho)
\end{equation}
$A,B\in\textnormal{SL}\left(3,\mathbbm{C}\right)$ that bring $\rho$ to the form
\begin{equation}\label{eq:formUPB8}
\chi\left(I-\sum_{j=1}^5\left|\tilde\phi_j\otimes\tilde\psi_j\right>\left<\tilde\phi_j\otimes\tilde\psi_j\right|\right),
\end{equation} 
$\chi>0$, where $\left\{\tilde\phi_j\otimes\tilde\psi_j\right\}_{j=1}^5$ is an orthonormal UPB.
\begin{proof}
Let us choose $A,B\in\textnormal{SL}\left(3,\mathbbm{C}\right)$ such that
$\textnormal{Ad}_{A\otimes B}(\rho)$ is of the form \eqref{eq:formUPB8}. Clearly, the vectors $\left(A\otimes B\right)\left(\tilde\phi_j\otimes\tilde\psi_j\right)$ for $j=1,2,\ldots,5$ belong to ${\rm ker}(\rho)$, so that
\begin{equation}
\left\{\left(A\otimes B\right)\left(\tilde\phi_j\otimes\tilde\psi_j\right)\right\}_{j=1}^5\subset\left\{\chi'_i\phi_i\otimes\psi_i\right\}_{i=1}^6
\end{equation}
for some $\chi'_i\in\mathbbm{C}$. Thus, we must have
\begin{equation}
\tilde\phi_j\otimes\tilde\psi_j=\chi_j\left(A^{-1}\phi_{i_j}\right)\otimes\left(B^{-1}\psi_{i_j}\right),\quad i=1,2,\ldots,5
\end{equation}
for some five-element subset $\left\{\phi_{i_j}\otimes\psi_{i_j}\right\}_{j=1}^5$ of $\left\{\phi_i\otimes\psi_i\right\}_{i=1}^6$ and some $\chi_j\in\mathbbm{C}$.

As we remarked above, there exist presicely one suitable transformation $\rho\mapsto\textnormal{Ad}_{A\otimes B}(\rho)$, $A,B\in\textnormal{SL}\left(3,\mathbbm{C}\right)$ for each choice of $\left\{\phi_{i_j}\otimes\psi_{i_j}\right\}_{j=1}^5$. This gives us at most six different transformations that bring $\rho$ to the form \eqref{eq:formUPB8}. We need to show that no pair of them coincide. Without loss of generality, we may confine our discussion to transformations $A^{-1}\otimes B^{-1}$ that bring $\left\{\phi_i\otimes\psi_i\right\}_{i=1}^5$ and $\left\{\phi_j\otimes\psi_j\right\}_{j=2}^6$ to the UPB form. Assume for the moment that a single $A^{-1}\otimes B^{-1}$ does tha job for both $\left\{\phi_i\otimes\psi_i\right\}_{i=1}^5$ and $\left\{\phi_j\otimes\psi_j\right\}_{j=2}^6$. By the argument of \cite[Section F]{S2011}, we must require $\textnormal{Ad}_{A\otimes B}$ to bring $\rho$ to the form \eqref{eq:formUPB8}, where $\tilde\phi_j\otimes\tilde\psi_j$ is a transform of $\left\{\phi_i\otimes\psi_i\right\}_{i=1}^5$, and similarly for $\left\{\phi_j\otimes\psi_j\right\}_{j=2}^6$. Thus, we must have
\begin{multline}
\chi\left(I-\sum_{j=1}^5\left|\chi_i\right|^2\left|A^{-1}\phi_i\otimes B^{-1}\psi_i\right>\left<A^{-1}\phi_i\otimes B^{-1}\psi_i\right|\right)=\\=\chi'\left(I-\sum_{k=2}^6\left|\chi_k\right|^2\left|A^{-1}\phi_k\otimes B^{-1}\psi_k\right>\left<A^{-1}\phi_k\otimes B^{-1}\psi_k\right|\right)
\end{multline}
for some $\chi,\chi'>0$. Consequently, the subspaces spanned by $\left\{A^{-1}\phi_i\otimes B^{-1}\psi_i\right\}_{i=1}^5$ and $\left\{A^{-1}\phi_j\otimes B^{-1}\psi_j\right\}_{j=2}^6$ must be identical. The vectors $A^{-1}\phi_1\otimes B^{-1}\psi_1$ and $A^{-1}\phi_6\otimes B^{-1}\psi_6$ are both orthogonal to $\left\{A^{-1}\phi_k\otimes B^{-1}\psi_k\right\}_{k=2}^5$. Since $A^{-1}\phi_1\otimes B^{-1}\psi_1\neq A^{-1}\phi_6\otimes B^{-1}\psi_6$, there exists a vector $w$, orthogonal to $A^{-1}\phi_1\otimes B^{-1}\psi_1$, in the linear span of $A^{-1}\phi_1\otimes B^{-1}\psi_1$ and $A^{-1}\phi_6\otimes B^{-1}\psi_6$. We see that $w$ is orthogonal to all $A^{-1}\phi_i\otimes B^{-1}\psi_i$ for $i=1,2,\ldots,5$, but it is contained in $\left\{A^{-1}\phi_i\otimes B^{-1}\psi_i\right\}_{i=1}^6$. Hence, the sets $\left\{A^{-1}\phi_i\otimes B^{-1}\psi_i\right\}_{i=1}^5$ and $\left\{A^{-1}\phi_j\otimes B^{-1}\psi_j\right\}_{j=2}^6$ must span different subspaces, which is a contradiction.
\end{proof}
\end{prop}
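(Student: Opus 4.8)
The plan is to take as given the structural facts recalled above from \cite{S2011}: the kernel of $\rho$ is five-dimensional, it contains exactly the six product vectors $\left\{\phi_i\otimes\psi_i\right\}_{i=1}^6$, every five of these span ${\rm ker}(\rho)$ and form a gUPB, and for each such quintuple there is a \emph{unique} transformation of the form \eqref{eq:transformAdAB} carrying $\rho$ to the orthonormal UPB form \eqref{eq:formUPB8}. Since there are exactly $\binom{6}{5}=6$ quintuples, this already yields at most six such transformations; and conversely, any $\textnormal{Ad}_{A\otimes B}$ that brings $\rho$ into the shape \eqref{eq:formUPB8} produces five orthonormal product vectors spanning the kernel of the transformed state, whose preimages under $A\otimes B$ are five product vectors in ${\rm ker}(\rho)$, i.e.\ a quintuple from the six. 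Thus every admissible transformation is one of the six attached to the quintuples, and the entire proposition reduces to showing that these six transformations are pairwise distinct.

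To prove distinctness I would argue by contradiction, supposing that a single $\textnormal{Ad}_{A\otimes B}$ simultaneously realizes the UPB form for two different quintuples. Any two distinct five-element subsets of $\left\{1,\ldots,6\right\}$ agree in four indices and differ by one, so after relabeling I may assume the two quintuples are $\left\{\phi_i\otimes\psi_i\right\}_{i=1}^5$ and $\left\{\phi_j\otimes\psi_j\right\}_{j=2}^6$. Writing $\textnormal{Ad}_{A\otimes B}(\rho)$ in form \eqref{eq:formUPB8} once for each quintuple and equating the two expressions forces the two projection parts to agree, and hence the spans of $\left\{A^{-1}\phi_i\otimes B^{-1}\psi_i\right\}_{i=1}^5$ and $\left\{A^{-1}\phi_j\otimes B^{-1}\psi_j\right\}_{j=2}^6$ to be the same five-dimensional subspace, namely the kernel of the transformed state.

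The crux is then a short orthogonality argument producing a nonzero vector that lies in this common subspace yet is orthogonal to a spanning set of it. Since the first quintuple becomes orthonormal, $A^{-1}\phi_1\otimes B^{-1}\psi_1$ is orthogonal to $\left\{A^{-1}\phi_k\otimes B^{-1}\psi_k\right\}_{k=2}^5$, and since the second quintuple does too, so is $A^{-1}\phi_6\otimes B^{-1}\psi_6$. Choosing $w$ in the span of the two distinct vectors $A^{-1}\phi_1\otimes B^{-1}\psi_1$ and $A^{-1}\phi_6\otimes B^{-1}\psi_6$ with $w$ orthogonal to $A^{-1}\phi_1\otimes B^{-1}\psi_1$ gives a nonzero $w$ orthogonal to all of $\left\{A^{-1}\phi_i\otimes B^{-1}\psi_i\right\}_{i=1}^5$; but by the coincidence of the two spans, $w$ also lies in the span of these same five vectors, so $w=0$, a contradiction. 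The main obstacle I anticipate is the middle step: justifying that equality of the two form-\eqref{eq:formUPB8} expressions really pins the two quintuples to the \emph{same} subspace (not merely one of equal dimension) and tracking which orthogonality relations survive the passage to $A^{-1}\otimes B^{-1}$; once the common-span fact is secured, the $w$-construction is routine linear algebra.
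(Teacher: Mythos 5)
Your proposal is correct and takes essentially the same route as the paper's own proof: you reduce the count to the six kernel quintuples via the uniqueness-per-quintuple fact, and you establish pairwise distinctness by the identical contradiction argument, equating the two form-\eqref{eq:formUPB8} expressions to force a common span and then constructing a nonzero $w$ in the span of $A^{-1}\phi_1\otimes B^{-1}\psi_1$ and $A^{-1}\phi_6\otimes B^{-1}\psi_6$ that is orthogonal to the whole first quintuple. The only (shared, minor) point to make explicit is that these two vectors are linearly independent, not merely distinct, so that $w$ can be chosen nonzero.
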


A simple consequence of the above result is the following.

\begin{prop}\label{prop:onlyfive}
Let $\rho$ be an entangled $3\times3$ (unnormalized) quantum state of the form
\begin{equation}\label{eq:formUPB}
I-\sum_{i=1}^5\left|u_i\otimes v_i\right>\left<u_i\otimes v_i\right|,
\end{equation}
where $\left\{u_i\otimes v_i\right\}_{i=1}^5$ is an orthonormal UPB in $\mathbbm{C}^3\otimes\mathbbm{C}^3$. There exist at most five other states of the form \eqref{eq:formUPB}, equivalent to $\rho$ via transformations $\rho\mapsto\textnormal{Ad}_{A\otimes B}(\rho)=\left(A\otimes B\right)^{\ast}\rho\left(A\otimes B\right)$, where $A,B\in\textnormal{GL}\left(3,\mathbbm{C}\right)$.
\begin{proof}
Since $\rho$ is a PPT entangled state of rank four in a $3\times 3$ system, by Proposition \ref{prop:onlysix}, there exist precisely six transformations $\rho\mapsto\textnormal{Ad}_{A\otimes B}(\rho)$ with $A,B\in\textnormal{SL}\left(3,\mathbbm{C}\right)$ that bring $\rho$ to the form \eqref{eq:formUPB}, multiplied by some $\chi>0$. Since $\rho$ is itself of the form \eqref{eq:formUPB}, there exist at most five other states of the form $\chi\left(I-\sum_{i=1}^5\left|u_i\otimes v_i\right>\left<u_i\otimes v_i\right|\right)$, equivalent to $\rho$ via transformations of the type $\rho\mapsto\textnormal{Ad}_{A\otimes B}(\rho)$, $A,B\in\textnormal{SL}\left(3,\mathbbm{C}\right)$. From this, it immediately follows that there can only exist five states of the form \eqref{eq:formUPB}, equivalent to $\rho$ via transformations $\rho\mapsto\textnormal{Ad}_{A\otimes B}(\rho)$ with $A,B\in\textnormal{GL}\left(3,\mathbbm{C}\right)$.
\end{proof}
\end{prop}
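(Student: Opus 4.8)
The plan is to obtain Proposition~\ref{prop:onlyfive} as an essentially immediate corollary of Proposition~\ref{prop:onlysix}. The first step is to check that a state $\rho$ of the form \eqref{eq:formUPB} actually falls under the hypotheses of Proposition~\ref{prop:onlysix}. Because the vectors $\{u_i\otimes v_i\}_{i=1}^5$ are orthonormal, $\sum_{i=1}^5\left|u_i\otimes v_i\right\rangle\left\langle u_i\otimes v_i\right|$ is a rank-five projection, so $\rho=I-\sum_{i=1}^5\left|u_i\otimes v_i\right\rangle\left\langle u_i\otimes v_i\right|$ is the orthogonal projection onto the four-dimensional complement of the UPB span; it is entangled by hypothesis and PPT by the standard UPB construction of \cite{Bennett99}. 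Hence $\rho$ is a rank-four PPT entangled state in a $3\times 3$ system, and Proposition~\ref{prop:onlysix} applies to it.

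Second, I would invoke Proposition~\ref{prop:onlysix} verbatim: there are precisely six transformations $\textnormal{Ad}_{A\otimes B}$ with $A,B\in\textnormal{SL}(3,\mathbbm{C})$ taking $\rho$ to a state of the form $\chi\left(I-\sum_{j=1}^5\left|\tilde\phi_j\otimes\tilde\psi_j\right\rangle\left\langle\tilde\phi_j\otimes\tilde\psi_j\right|\right)$ with $\chi>0$ and $\{\tilde\phi_j\otimes\tilde\psi_j\}$ an orthonormal UPB. The decisive observation is that $\rho$ already has this shape, so the identity transformation ($A=B=I$, $\chi=1$) is one of the six and returns $\rho$ itself. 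Therefore at most five of the six transformations can deliver a projection genuinely different from $\rho$, and any coincidences among the six outputs would only lower this count, which is harmless for an upper bound.

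The remaining, and most delicate, step is to upgrade from the $\textnormal{SL}$ statement—outputs of the form $\chi\cdot(\text{projection})$—to the $\textnormal{GL}$ statement requiring the output to lie exactly in the form \eqref{eq:formUPB}. Here I would write any $A\in\textnormal{GL}(3,\mathbbm{C})$ as $A=c_A A'$ with $A'\in\textnormal{SL}(3,\mathbbm{C})$, and likewise for $B$, noting that this rescaling alters $\textnormal{Ad}_{A\otimes B}(\rho)$ only by the positive real factor $|c_A|^2|c_B|^2$. Since every state of the form \eqref{eq:formUPB} is a rank-four projection and thus has a fixed normalization, the requirement that the output be exactly of this form pins the scalar down uniquely: a $\textnormal{GL}$ transformation hitting \eqref{eq:formUPB} corresponds, after rescaling its factors into $\textnormal{SL}$, to one of the six transformations of Proposition~\ref{prop:onlysix} with the ambient scalar absorbed into $\chi$. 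Consequently the distinct states of the form \eqref{eq:formUPB} reachable via $\textnormal{GL}$ coincide with the distinct projections produced by the six $\textnormal{SL}$ transformations, of which one is $\rho$, leaving at most five others. I expect this scalar bookkeeping to be the only point needing genuine care; everything else is a direct transcription of Proposition~\ref{prop:onlysix}.
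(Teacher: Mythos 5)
Your proposal is correct and follows essentially the same route as the paper: apply Proposition~\ref{prop:onlysix} to the rank-four PPT entangled state $\rho$, note that the identity is one of the six $\textnormal{SL}(3,\mathbbm{C})\otimes\textnormal{SL}(3,\mathbbm{C})$ transformations since $\rho$ itself has the form \eqref{eq:formUPB}, and reduce the $\textnormal{GL}$ case to the $\textnormal{SL}$ case. Your only addition is to spell out the scalar-rescaling step (writing $A=c_AA'$ with $A'\in\textnormal{SL}(3,\mathbbm{C})$ and using that rank-four projections have fixed normalization), which the paper compresses into ``it immediately follows.''
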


We are now ready to prove the main result of the present section.

\begin{thm}\label{thm:untypical3x3}
Let $\Phi$ denote the inverse of the Jamio{\l}kowski--Choi transform a $3\times 3$ (unnormalized) state
\begin{equation}
\rho=C_{\Phi}=I-\sum_{i=1}^5\left|u_i\otimes v_i\right>\left<u_i\otimes v_i\right|,
\end{equation}
where $\left\{u_i\otimes v_i\right\}_{i=1}^5$ is an orthonormal unextendible product basis in $\mathbbm{C}^3\otimes\mathbbm{C}^3$. Then the mapping cone $\mathcal{M}_{\left\{\Phi\right\}}$ generated by $\Phi$ is untypical.
\begin{proof}
By Remark 3.22 of \cite{S2011}, we know that $\rho$ and $(id\otimes t)(\rho)$ are of Schmidt rank $2$, hence $\Phi\in\SPk{2}\cap\SPk{2}\circ t$, cf. e.g. \cite{SSZ09}. We also have $\Phi\not\in\SPk{1}$ as a consequence of non-separability of $\rho$. If we prove that $\SPk{2}\cap\SPk{2}\circ t\not\subset\mathcal{M}_{\left\{\Phi\right\}}$, Lemma \ref{lem:SP2SP2t} will tell us that $\mathcal{M}_{\left\{\Phi\right\}}$ is not a typical mapping cone. Let us denote by $J$ the Jamio{\l}kowski--Choi map, i.e. $J:\Phi\mapsto C_{\Phi}$. If $\SPk{2}\cap\SPk{2}\circ t$ was contained in $\mathcal{M}_{\left\{\Phi\right\}}$, all the maps of the form
\begin{equation}\label{eq:Jminusone}
\tilde\Phi=J^{-1}\left(I-\sum_{j=1}^5\left|\tilde v_j\otimes\tilde w_j\right>\left<\tilde v_j\otimes\tilde w_j\right|\right)
\end{equation}
for some orthonormal UPB $\left\{\tilde v_j\otimes\tilde w_j\right\}_{j=1}^5$ would have to be contained in $\mathcal{M}_{\left\{\Phi\right\}}$. Due to the extremality of $\tilde\Phi$, cf. \cite{LS2010}, we would need to have 
\begin{equation}
\tilde\Phi=\textnormal{Ad}_A\circ\Phi\circ\textnormal{Ad}_B
\end{equation}
for some $A,B\in\textnormal{GL}\left(3,\mathbbm{C}\right)$, which translates to
\begin{equation}
I-\sum_{i=1}^5\left|\tilde v_j\otimes\tilde w_j\right>\left<\tilde v_j\otimes\tilde w_j\right|=\textnormal{Ad}_{A\otimes B}\left(I-\sum_{i=1}\left|v_i\otimes w_i\right>\left<v_i\otimes w_i\right|\right),
\end{equation}
where both $\left\{v_i\otimes w_i\right\}_{i=1}^5$ and $\left\{\tilde v_j\otimes\tilde w_j\right\}_{j=1}^5$ are orthonormal unextendible product bases. However, we know from Proposition \ref{prop:onlyfive} that the above equality can hold for at most five choices of the unextendible product basis $\left\{\tilde v_j\otimes\tilde w_j\right\}_{j=1}^5$, different than $\left\{v_i\otimes w_i\right\}_{i=1}^5$. For other choices of $\left\{\tilde v_j\otimes\tilde w_j\right\}_{j=1}^5$, the equality cannot hold. Consequently, there exist maps $\tilde\Phi$ of the form \eqref{eq:Jminusone} that are not elements of $\mathcal{M}_{\left\{\Phi\right\}}$. Since $\tilde\Phi\in\SPk{2}\cap\SPk{2}\circ t$, we obtain $\SPk{2}\cap\SPk{2}\circ t\not\subset\mathcal{M}_{\left\{\Phi\right\}}$, which implies that $\mathcal{M}_{\left\{\Phi\right\}}$ is an untypical mapping cone, by Lemma \ref{lem:SP2SP2t}.
\end{proof}
\end{thm}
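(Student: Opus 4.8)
The plan is to invoke Lemma~\ref{lem:SP2SP2t} in contrapositive form: to conclude that $\mathcal{M}_{\{\Phi\}}$ is untypical it suffices to show that it is neither equal to $\SPk{1}$ (condition~(1)) nor contains $\SPk{2}\cap\SPk{2}\circ t$ (condition~(2)). First I would pin down where $\Phi$ itself sits. Since $C_{\Phi}=\rho$ is the projection onto the orthogonal complement of an orthonormal UPB, the results of \cite{S2011} (Remark~3.22) give that both $\rho$ and $(id\otimes t)(\rho)$ have Schmidt number $2$; combined with the equivalence $SN(C_{\Phi})\leq k \iff \Phi\in\SPk{k}$ recorded in Section~\ref{sec:positive_maps}, this yields $\Phi\in\SPk{2}\cap\SPk{2}\circ t$. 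On the other hand $\rho$ is entangled, so $SN(\rho)\geq 2$ and hence $\Phi\notin\SPk{1}$. Because $\Phi\in\mathcal{M}_{\{\Phi\}}$, condition~(1) is already excluded, and the entire argument reduces to disproving condition~(2).

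To disprove condition~(2) I would exhibit a member of $\SPk{2}\cap\SPk{2}\circ t$ that lies outside $\mathcal{M}_{\{\Phi\}}$. The natural candidates form the family of maps $\tilde\Phi:=J^{-1}\big(I-\sum_{j=1}^5\ketbra{\tilde v_j\otimes\tilde w_j}{\tilde v_j\otimes\tilde w_j}\big)$ indexed by orthonormal UPBs $\{\tilde v_j\otimes\tilde w_j\}_{j=1}^5$. By exactly the reasoning of the previous paragraph, each such $\tilde\Phi$ again lies in $\SPk{2}\cap\SPk{2}\circ t$. Consequently, if condition~(2) held, then \emph{every} map in this family would belong to $\mathcal{M}_{\{\Phi\}}$, and I would aim to contradict this.

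The main work is extracting a rigid structural consequence from $\tilde\Phi\in\mathcal{M}_{\{\Phi\}}$. Every element of $\mathcal{M}_{\{\Phi\}}$ is a sum $\sum_i\Ad{A_i}\circ\Phi\circ\Ad{B_i}$ of positive maps; since $\tilde\Phi$ is extremal in $\cl{P}_1$ (a fact established for these UPB-complement maps in \cite{LS2010}), each summand must be a nonnegative multiple of $\tilde\Phi$, so after absorbing a scalar we obtain $\tilde\Phi=\Ad{A}\circ\Phi\circ\Ad{B}$ for a \emph{single} pair $A,B$. Passing to Choi matrices turns this into the operator identity $J(\tilde\Phi)=\Ad{A\otimes B}(\rho)$, and comparing ranks (both operators have rank $4$) forces $A,B\in\textnormal{GL}(3,\mathbbm{C})$. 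This is precisely an $\Ad{A\otimes B}$-equivalence between two states of the form~\eqref{eq:formUPB}. I expect this extremality-and-invertibility step to be the main obstacle, since it is where the cited machinery of \cite{LS2010,S2011} on extreme PPT states must be applied carefully to rule out degenerate $A,B$ and reduce a sum to a single term.

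Once the reduction is in place, the finiteness result finishes the job. By Proposition~\ref{prop:onlyfive}, at most five orthonormal UPBs distinct from $\{u_i\otimes v_i\}_{i=1}^5$ can produce a state that is $\Ad{A\otimes B}$-equivalent to $\rho$. However, the orthonormal UPBs of size five in $\mathbbm{C}^3\otimes\mathbbm{C}^3$ form a continuous, uncountable family, so there certainly exist orthonormal UPBs $\{\tilde v_j\otimes\tilde w_j\}_{j=1}^5$ outside this finite list; for such a basis the required equality cannot hold, so the corresponding $\tilde\Phi$ does not lie in $\mathcal{M}_{\{\Phi\}}$. This contradicts condition~(2), which therefore fails. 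Together with the failure of condition~(1), Lemma~\ref{lem:SP2SP2t} then forces $\mathcal{M}_{\{\Phi\}}$ to be untypical, completing the proof.
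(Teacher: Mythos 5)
Your proposal mirrors the paper's proof step for step: ruling out condition (1) of Lemma~\ref{lem:SP2SP2t} via $\Phi \in \cl{M}_{\{\Phi\}} \setminus \SPk{1}$, placing every UPB-complement map $\tilde\Phi$ in $\SPk{2} \cap (\SPk{2} \circ t)$ via Remark~3.22 of \cite{S2011}, using extremality of $\tilde\Phi$ (from \cite{LS2010}) to collapse the sum $\sum_i \Ad{A_i} \circ \Phi \circ \Ad{B_i}$ to a single term $\Ad{A} \circ \Phi \circ \Ad{B}$, and then contradicting condition (2) by playing the finiteness statement of Proposition~\ref{prop:onlyfive} against the continuum of orthonormal UPBs. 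Making explicit that orthonormal UPBs form an uncountable family is a point the paper leaves implicit, and it is a genuine (and true) ingredient.

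There is, however, one incorrect step: your claim that ``comparing ranks (both operators have rank $4$) forces $A,B \in \mathrm{GL}(3,\mathbbm{C})$.'' The Choi identity reads $C_{\tilde\Phi} = (B^t \otimes A)\,\rho\,(B^t \otimes A)^*$ (up to the choice of convention for which factor carries the transpose), and rank comparison does not rule out singular $A$ or $B$: for instance ${\rm rank}(A) = 2$ and ${\rm rank}(B) = 3$ gives ${\rm rank}(B^t \otimes A) = 6$, and compressing a rank-$4$ positive semidefinite operator by such a map can perfectly well yield rank $4$ again (even ${\rm rank}(A) = {\rm rank}(B) = 2$ gives a rank-$4$ tensor factor). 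To be fair, the paper simply asserts $A,B \in \mathrm{GL}(3,\mathbbm{C})$ without argument, so you are trying to fill a real gap --- but the filling fails. A correct repair uses product-vector structure rather than rank: if, say, $A$ were singular, choose $0 \neq a$ with $A^* a = 0$; then $(x \otimes a)^*(B^t \otimes A) = 0$ for every $x$, so $(x \otimes a)^* C_{\tilde\Phi} (x \otimes a) = 0$, and positive semidefiniteness of $C_{\tilde\Phi}$ forces the entire three-dimensional family of product vectors $x \otimes a$ to lie in $\ker(C_{\tilde\Phi}) = {\rm span}\{\tilde v_j \otimes \tilde w_j\}_{j=1}^5$. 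This contradicts the fact, recalled from \cite{S2011} just before Proposition~\ref{prop:onlysix}, that the kernel of such a state contains only six product vectors up to scalar multiples; the same argument applies to $B$. With that substitution your argument is complete and coincides with the paper's.
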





\subsection{Typical and Untypical Mapping Cones in the \texorpdfstring{$n = 2$}{n = 2} Case}\label{sec:n2untypical}

The examples in the previous sections demonstrate some different untypical mapping cones when the dimension $n \geq 3$. However, we have not yet seen any untypical convex mapping cones in the $n = 2$ case. We now characterize convex mapping cones generated by single maps in the $n = 2$ case and show that, while there are only four different typical convex mapping cones in this setting, there are many untypical mapping cones.
\begin{prop}\label{prop:four_typical_cones}
	The only four typical convex mapping cones in $\cl{P}_1(\cl{L}(\cl{H}_2))$ are $\cl{P}_1$, $\cl{CP}$, $\cl{CP} \circ t$, and $\SPk{1}$.
\end{prop}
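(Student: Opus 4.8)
The plan is to show that, once the dimension is fixed at $n=2$, the list of base cones from which typical cones are built collapses to a four-element set that is already closed under the three permitted operations $\circ\, t$, $\cap$, and $\vee$. First I would reduce the generators. By Choi's theorem $\cl{P}_2 = \cl{CP}$, and since every operator on $\cl{H}_2$ has rank at most $2$ the rank constraint defining $\SPk{2}$ is vacuous, so $\SPk{2} = \cl{CP}$ as well. Hence the only distinct cones among $\{\cl{P}_k,\SPk{k}\}_{k=1}^2$ are $\cl{P}_1$, $\cl{CP}$, and $\SPk{1}$.

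Next I would compute the images of these three under the transpose operation. The cone $\cl{CP}\circ t$ is genuinely new, but the other two are fixed. The identity $\cl{P}_1 = \cl{CP}\vee\cl{CP}\circ t$ recalled in Section~\ref{sec:cone_generated} is symmetric under $\circ\, t$ (applying $\circ\, t$ and using $t\circ t = id$ returns the same join), giving $\cl{P}_1\circ t = \cl{P}_1$; and $\SPk{1}\circ t = \SPk{1}$ because the partial transpose of a separable operator is again separable, so the Schmidt-number-one condition characterizing $\SPk{1}$ is preserved by $\circ\, t$. Therefore the base cones, together with their transposes, form exactly the four-element set $\cl{S} := \{\cl{P}_1,\cl{CP},\cl{CP}\circ t,\SPk{1}\}$.

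It then remains to check that $\cl{S}$ is closed under $\cap$ and $\vee$, which I would do by exhibiting its lattice structure. The inclusions $\SPk{1}\subseteq\cl{CP}\subseteq\cl{P}_1$ and $\SPk{1}\subseteq\cl{CP}\circ t\subseteq\cl{P}_1$ are immediate, so the only intersection and join that are not automatic are those of the incomparable pair $\cl{CP}$ and $\cl{CP}\circ t$. Here $\cl{CP}\vee\cl{CP}\circ t = \cl{P}_1$ is again the decomposability identity, while $\cl{CP}\cap\cl{CP}\circ t = \SPk{1}$ because this intersection is precisely the cone of maps whose Choi matrix is PPT, and in $2\times 2$ systems every PPT operator is separable. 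Thus $\cl{S}$ is a sublattice (isomorphic to the Boolean lattice on two atoms) that is stable under $\cap$, $\vee$, and $\circ\, t$. Since every typical cone is obtained from the generators by iterating these three operations, and since the set is finite and closed under the binary operations, no iteration can leave $\cl{S}$; hence the typical convex mapping cones in $\cl{P}_1(\cl{L}(\cl{H}_2))$ are exactly the four listed. The substantive content is carried by the two classical facts about $2\times 2$ systems---decomposability of every positive map and the equivalence of the PPT and separability conditions---while the remaining steps are routine bookkeeping with the lattice; I expect the identity $\cl{CP}\cap\cl{CP}\circ t = \SPk{1}$ to be the main point requiring an external input.
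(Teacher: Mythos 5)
Your proposal is correct and takes essentially the same route as the paper: both arguments reduce everything to the two classical $2\times 2$ facts $\cl{CP} \vee (\cl{CP} \circ t) = \cl{P}_1$ and $\cl{CP} \cap (\cl{CP} \circ t) = \SPk{1}$, which the paper cites and declares the rest trivial. You merely make explicit the bookkeeping the paper omits (collapsing $\cl{P}_2 = \SPk{2} = \cl{CP}$ via Choi's theorem and the rank bound, and verifying closure of the four-element lattice under $\circ\, t$, $\cap$, and $\vee$), all of which is correct.
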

\begin{proof}
	It is well-known that $\cl{CP} \vee (\cl{CP} \circ t) = \cl{P}_1$ and $\cl{CP} \cap (\cl{CP} \circ t) = \SPk{1}$ in this case \cite{S63,W76}, so the result is trivial.
\end{proof}

We now consider how we might construct an untypical mapping cone from a single map, in the same manner as was done in Sections~\ref{sec:atomic_map} and~\ref{sec:non_atomic_map}.
\begin{thm}\label{thm:map_cones_n2}
	Let $\Phi : \cl{L}(\cl{H}_2) \rightarrow \cl{L}(\cl{H}_2)$ be a positive map. The following characterizes when $\cl{M}_{\{\Phi\}}$ is typical or untypical:
	\begin{enumerate}[(a)]
		\item If $\Phi \in \SPk{1}$, then $\cl{M}_{\{\Phi\}} = \SPk{1}$.
		\item If $\Phi \in \cl{CP} \backslash \SPk{1}$ and ${\rm rank}(C_\Phi) = 1$, then $\cl{M}_{\{\Phi\}} = \cl{CP}$.
		\item If $\Phi \in \cl{CP} \backslash \SPk{1}$ and ${\rm rank}(C_\Phi) \geq 2$, then $\cl{M}_{\{\Phi\}}$ is untypical.
		\item If $\Phi \in (\cl{CP} \circ t) \backslash \SPk{1}$ and ${\rm rank}(C_{\Phi\circ t}) = 1$, then $\cl{M}_{\{\Phi\}} = \cl{CP} \circ t$.
		\item If $\Phi \in (\cl{CP} \circ t) \backslash \SPk{1}$ and ${\rm rank}(C_{\Phi\circ t}) \geq 2$, then $\cl{M}_{\{\Phi\}}$ is untypical.
		\item If $\Phi \notin \cl{CP} \cup (\cl{CP} \circ t)$, then $\cl{M}_{\{\Phi\}}$ is untypical.
	\end{enumerate}
\end{thm}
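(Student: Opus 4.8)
The plan is to prove each of the six cases in Theorem~\ref{thm:map_cones_n2} by leveraging Proposition~\ref{prop:four_typical_cones}, which tells us that a cone $\cl{M}_{\{\Phi\}}$ is typical if and only if it equals one of $\SPk{1}$, $\cl{CP}$, $\cl{CP}\circ t$, or $\cl{P}_1$. Case~(a) is immediate from Proposition~\ref{prop:sing_map_cone_superpos}. For case~(b), if $\Phi\in\cl{CP}$ then $\cl{M}_{\{\Phi\}}\subseteq\cl{CP}$ since $\cl{CP}$ is a convex mapping cone containing $\Phi$; the rank-one hypothesis means $\Phi=\Ad{A}$ for some $A$, and since $\Phi\notin\SPk{1}$ we must have $\mathrm{rank}(A)=2$, so $\cl{M}_{\{\Phi\}}=\SPk{2}=\cl{CP}$ by Proposition~\ref{prop:sing_map_cone_ksuper} with $n=2$, $k=2$. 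Case~(d) follows from case~(b) by applying the transpose: if $\Phi\circ t$ has Choi rank $1$ then $\cl{M}_{\{\Phi\circ t\}}=\cl{CP}$ by~(b), and composing with $t$ gives $\cl{M}_{\{\Phi\}}=\cl{CP}\circ t$ (using that $\cl{M}_{\{\Psi\circ t\}}=\cl{M}_{\{\Psi\}}\circ t$, which follows from the argument in Proposition~\ref{prop:sing_map_cone_trans}).

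The substantive cases are~(c), (e), and~(f). For case~(f), where $\Phi\notin\cl{CP}\cup(\cl{CP}\circ t)$, I would argue that $\cl{M}_{\{\Phi\}}$ cannot be any of the four typical cones: it is not contained in $\cl{CP}$ (since $\Phi\notin\cl{CP}$) and not in $\cl{CP}\circ t$ (since $\Phi\notin\cl{CP}\circ t$), ruling out $\SPk{1}$, $\cl{CP}$, and $\cl{CP}\circ t$. To rule out $\cl{P}_1$, I would show $id\notin\cl{M}_{\{\Phi\}}$. The key is that in the $n=2$ case, $id$ is an extreme ray of $\cl{P}_1$, so if $id\in\cl{M}_{\{\Phi\}}$ then $id=\Ad{A}\circ\Phi\circ\Ad{B}$ for some $A,B$; mimicking the proof of Theorem~\ref{thm:atomic_untypical}, invertibility of $A,B$ forces $\Phi=\Ad{A^{-1}}\circ id\circ\Ad{B^{-1}}\in\cl{CP}$, a contradiction.

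For case~(c), where $\Phi\in\cl{CP}\setminus\SPk{1}$ with $\mathrm{rank}(C_\Phi)\geq 2$, the inclusion $\cl{M}_{\{\Phi\}}\subseteq\cl{CP}$ already eliminates $\cl{CP}\circ t$ and $\cl{P}_1$ as candidates (since both strictly contain maps outside $\cl{CP}$ when $n=2$), and $\Phi\notin\SPk{1}$ eliminates $\SPk{1}$. So it remains to show $\cl{M}_{\{\Phi\}}\neq\cl{CP}$, i.e. that $\cl{M}_{\{\Phi\}}$ is a proper subcone of $\cl{CP}=\SPk{2}$. The natural strategy is to exhibit a rank-one adjoint map $\Ad{A}$ with $\mathrm{rank}(A)=2$ that is \emph{not} reachable from $\Phi$ by separable maps. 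Since $\mathrm{rank}(C_\Phi)\geq 2$, every representation $\Phi=\Ad{A}\circ\Phi\circ\Ad{B}$ reducing $\Phi$ to a single extreme adjoint fails for rank reasons: a rank-one Choi matrix cannot be obtained from a Choi matrix of rank $\geq 2$ via a single $\Ad{A\otimes B}$ unless $A\otimes B$ collapses the rank, which would contradict invertibility needed to capture a full-rank target $A$. Case~(e) then follows from~(c) by the transpose symmetry, exactly as~(d) followed from~(b).

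I expect the main obstacle to lie in case~(c): proving that a \emph{single} extreme generator with $\mathrm{rank}(C_\Phi)\geq 2$ cannot generate all of $\cl{CP}$. The delicate point is that $\cl{M}_{\{\Phi\}}$ is built from \emph{sums} $\sum_i\Ad{A_i}\circ\Phi\circ\Ad{B_i}$, so ruling out $\cl{CP}\subseteq\cl{M}_{\{\Phi\}}$ requires finding an extreme ray of $\cl{CP}$ that no such sum can produce — and since each summand has Choi matrix of rank at least the rank of a rank-one projection onto $C_\Phi$'s range pattern, the argument must carefully track how $\Ad{A\otimes B}$ acts on the Schmidt structure of $C_\Phi$. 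The cleanest route is to pick a rank-one $\Ad{w}$ whose Choi matrix $ww^*$ is a rank-one projection, and show that if $\Ad{w}=\sum_i\Ad{A_i}\circ\Phi\circ\Ad{B_i}$ then extremality forces a single term $\Ad{w}=\Ad{A}\circ\Phi\circ\Ad{B}$, whence the rank bound $\mathrm{rank}(C_{\Ad{w}})=1<2\leq\mathrm{rank}(C_\Phi)$ combined with the fact that $\Ad{A\otimes B}$ cannot increase and generically cannot decrease Schmidt rank to $1$ from a suitable choice of $w$ yields the contradiction.
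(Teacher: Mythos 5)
Your overall plan coincides with the paper's proof in most cases: (a) is Proposition~\ref{prop:sing_map_cone_superpos}, (d) and (e) follow from (b) and (c) by the transpose symmetry (the identity $\cl{M}_{\{\Psi\circ t\}}=\cl{M}_{\{\Psi\}}\circ t$ you invoke is sound, since $t\circ\Ad{B}\circ t$ is completely positive), and your (f) is handled exactly as in Theorem~\ref{thm:atomic_untypical}: extremality of $id$ in $\cl{P}_1$ forces a single term $\Ad{A}\circ\Phi\circ\Ad{B}=id$, and invertibility of $A,B$ then puts $\Phi$ in $\CP$, a contradiction. Your route to (b), via Proposition~\ref{prop:sing_map_cone_ksuper} with $k=n=2$ together with the observation $\SPk{2}(\cl{L}(\cl{H}_2))=\CP(\cl{L}(\cl{H}_2))$, is a legitimate shortcut; the paper instead constructs $A,B$ with $\Ad{A}\circ\Phi\circ\Ad{B}=id$ by hand, which is essentially the same computation that proves Proposition~\ref{prop:sing_map_cone_ksuper}, so that difference is cosmetic.

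The genuine gap is in case (c), exactly where you flagged ``the main obstacle''. Having reduced, via extremality, to ruling out a single identity $\Ad{w}=\Ad{A}\circ\Phi\circ\Ad{B}$ (taking the target to be $id$ itself is simplest, and is what the paper does), your closing rank argument does not hold together: you claim $\Ad{A\otimes B}$ ``generically cannot decrease Schmidt rank to $1$'', but the statement must hold for \emph{all} $A,B$, not generically, and a rank-deficient $A$ or $B$ certainly can collapse ${\rm rank}(C_\Phi)$ to $1$; your assertion that such a collapse ``would contradict invertibility needed to capture a full-rank target'' is precisely the step you never prove. (You also drift between Choi rank and Schmidt rank; Choi rank is the invariant that actually works here.) The paper closes this with a two-case dichotomy that your sketch is missing: if ${\rm rank}(A)=1$ or ${\rm rank}(B)=1$, then $\Ad{A}\circ\Phi\circ\Ad{B}\in\SPk{1}$, so it cannot equal $id\notin\SPk{1}$ --- this disposes of the rank-collapsing case without any invertibility heuristics; if instead $A$ and $B$ are both invertible (the only other option when $n=2$), then $C_{\Ad{A}\circ\Phi\circ\Ad{B}}$ is obtained from $C_\Phi$ by conjugation with the invertible operator $B^t\otimes A$, so its rank equals ${\rm rank}(C_\Phi)\geq 2>1={\rm rank}(C_{id})$, again a contradiction. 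Inserting this dichotomy completes (c), and (e) then follows by the transpose exactly as you say.
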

\begin{proof}
	 Condition (a) is true (regardless of $n$) by Proposition~\ref{prop:sing_map_cone_superpos}.
	 
	 For (b), note that if $\Phi \in \cl{CP} \backslash \SPk{1}$ and ${\rm rank}(C_\Phi) = 1$, then $\Phi = {\rm Ad}_V$ for some $V$ with ${\rm rank}(V) = 2$. Thus we can write $V = \mathbf{a}_1\mathbf{b}_1^* + \mathbf{a}_2\mathbf{b}_2^*$ for some linearly independent sets $\{\mathbf{a}_1,\mathbf{a}_2\}$ and $\{\mathbf{b}_1,\mathbf{b}_2\}$. Define $A,B \in \cl{L}(\cl{H}_2)$ to be operators that satisfy $A\mathbf{a}_i = \mathbf{e}_i$ and $B\mathbf{b}_i = \mathbf{e}_i$ for $i = 1,2$. Then ${\rm Ad}_A \circ \Phi \circ {\rm Ad}_B = id$. Thus $id \in \cl{M}_{\{\Phi\}}$, so $\cl{CP} \subseteq \cl{M}_{\{\Phi\}}$. The other inclusion is trivial.

	 For (c), note that $\SPk{1} \subsetneq \cl{M}_{\{\Phi\}} \subseteq \cl{CP}$, so the only way that $\cl{M}_{\{\Phi\}}$ could be typical is if $\cl{M}_{\{\Phi\}} = \cl{CP}$. We will rule out this possibility by showing that $id \notin \cl{M}_{\{\Phi\}}$. To this end, write $\Phi = \sum_i {\rm Ad}_{A_i}$ with ${\rm rank}(A_1) = 2$. Since $id$ is an extreme point of the set of positive maps, we suppose (in order to get a contradiction) that there exist $A,B \in \cl{L}(\cl{H}_2)$ such that ${\rm Ad}_A \circ \Phi \circ {\rm Ad}_B = id$. If either ${\rm rank}(A) = 1$ or ${\rm rank}(B) = 1$ then ${\rm Ad}_A \circ \Phi \circ {\rm Ad}_B$ would be superpositive and thus not equal to $id$. However, if ${\rm rank}(A) = {\rm rank}(B) = 2$ then ${\rm rank}(C_{{\rm Ad}_A \circ \Phi \circ {\rm Ad}_B}) \geq 2$, so ${\rm Ad}_A \circ \Phi \circ {\rm Ad}_B \neq id$ in this case as well.

	 Conditions (d) and (e) follow immediately from conditions (b) and (c) by replacing $\Phi$ with $\Phi \circ t$.
	 
	 To see (f), note that if $\cl{M}_{\{\Phi\}}$ were typical, it would have to equal $\cl{P}_1$ since $\Phi$ is not a member of any of the other three typical convex mapping cones. In particular, this would imply that $id, t \in \cl{M}_{\{\Phi\}}$. Since $id$ is an extreme point of the set of positive maps, there must exist $A,B \in \cl{L}(\cl{H}_2)$ such that ${\rm Ad}_A \circ \Phi \circ {\rm Ad}_B = id$. As in the proof of Theorem~\ref{thm:atomic_untypical}, $A$ and $B$ must be invertible because $id$ has full rank as a linear operator, so $\Phi = {\rm Ad}_{A^{-1}} \circ \Phi \circ {\rm Ad}_{B^{-1}}$. This implies that $\Phi$ is completely positive, which is a contradiction and completes the proof. Note that because $t$ is also extreme in the set of positive maps, we could have similarly gotten a contradiction by showing that if $t \in \cl{M}_{\{\Phi\}}$ then $\Phi \in \cl{CP} \circ t$.
\end{proof}

\section{Spin Factors as Mapping Cones}\label{sec:spin_factors}

A construction of spin factors follows. Let
\begin{align*}
	\sigma_1 := \begin{bmatrix}1 & 0 \\ 0 & -1\end{bmatrix}, \quad \sigma_2 := \begin{bmatrix}0 & 1 \\ 1 & 0\end{bmatrix}, \quad \sigma_3 := \begin{bmatrix}0 & -i \\ i & 0\end{bmatrix}
\end{align*}
be the usual Pauli spin matrices on $\bb{C}^2$. Let $\sigma_3^{\otimes n}$ be the $n$-fold tensor product of $\sigma_3$ with itself and define
\begin{align*}
	s_1 & := \sigma_1 \otimes \overbrace{I \otimes I \otimes \cdots \otimes I}^{\text{$n-1$ times}}, \\
	s_2 & := \sigma_2 \otimes I \otimes I \otimes \cdots \otimes I, \\
	s_3 & := \sigma_3 \otimes \sigma_1 \otimes I \otimes \cdots \otimes I, \\
	s_4 & := \sigma_3 \otimes \sigma_2 \otimes I \otimes \cdots \otimes I, \\
	& \ \ \ \ \ \ \ \vdots \\
	s_{2n-1} & := \sigma_3^{n-1} \otimes \sigma_1 \\
	s_{2n} & := \sigma_3^{n-1} \otimes \sigma_2.
\end{align*}

Then the real linear span $V_k := {\rm span}\{I, s_1, s_2, \ldots, s_k\}$ (for $1 \leq k \leq 2n$) is a \emph{spin factor} of dimension $k+1$ (our decision to use the first $k$ $s_i$'s is merely for convenience, as spin factors obtained by choosing a different set of $k$ of these operators are isomorphic). We can regard $V_{2k-1}$ and $V_{2k}$ as contained in $\cl{L}(\bb{C}^{2^k})$ in the obvious way -- see \cite{HS84} for further details. If $\cl{H} = \bb{C}^{2^k} \otimes \cl{H}_m$ then the span of the set $\{I \otimes I_m, s_1 \otimes I_m, s_2 \otimes I_m, \ldots, s_k \otimes I_m\}$ is also a spin factor, which we denote $V_k \otimes I_m$.

For the remainder of this section, we use $\tau$ to denote the tracial state -- i.e., the trace functional normalized so that $\tau(I) = 1$. The positive projection $E_k$ onto $V_k$ is given by \cite{ES79}
\begin{align*}
	E_k(X) = \sum_{i=0}^k \tau(s_i X)s_i, \quad s_0 := I,
\end{align*}
and the positive projection $F_k$ onto $V_k \otimes I_m$ is given by
\begin{align*}
	F_k(A \otimes B) & = \sum_{i=0}^k \tau\big((s_i \otimes I_m)(A \otimes B)\big)s_i \otimes I_m \\
	& = \sum_{i=0}^k \tau(s_i A)s_i \otimes \tau(B)I_m \\
	& = (E_k \otimes \tau^\prime)(A \otimes B),
\end{align*}
where $\tau^\prime(X) = \tau(X)I_m$. Thus $F_k = E_k \otimes \tau^\prime$.

\subsection{Analogy with k-Positive and k-Superpositive Maps}\label{sec:spin_factor_analogy}

We now present some results that help us discuss the mapping cones generated by the projections $E_k$ and $F_k$ onto the spin factors $V_k$ and $V_k \otimes I_m$, respectively. We will see that there is a natural analogy between the cones of $k$-superpositive maps and these cones, and similarly their dual cones are analogous in a natural way to the cones of $k$-positive maps.
\begin{prop}\label{prop:dual_atomic} 
	If $\Phi = \Phi^\dagger = t \circ \Phi \circ t \in \cl{P}_1$ is atomic then $\cl{M}_{\{\Phi\}}^\circ = \{ \Psi \in \cl{P}_1 : \Psi \otimes \Phi \text{ is positive}\}$ is untypical.
\end{prop}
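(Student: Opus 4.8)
The plan is to prove the proposition in two stages: first identify the dual cone $\cl{M}_{\{\Phi\}}^\circ$ with the tensor-positivity set on the right-hand side, and then deduce its untypicality from the untypicality of $\cl{M}_{\{\Phi\}}$ itself by invoking biduality. The second stage is where the atomicity hypothesis is used, whereas the identification only needs the three self-duality conditions on $\Phi$.

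For the identification, I would unwind the definition of the Hilbert--Schmidt dual cone: $\Psi \in \cl{M}_{\{\Phi\}}^\circ$ precisely when $\langle C_\Psi, C_\Omega\rangle \geq 0$ for every $\Omega \in \cl{M}_{\{\Phi\}}$. Since $\cl{M}_{\{\Phi\}}$ is generated by $\Phi$ under composition with completely positive maps, and every completely positive map is a sum of maps $\Ad{A}$, it is enough to test against $\Omega = \Ad{A}\circ\Phi\circ\Ad{B}$, whose Choi matrix (as in the proof of Proposition~\ref{prop:sing_map_cone_reduc}) is $(A^t\otimes B)C_\Phi(A^t\otimes B)^*$. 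Feeding this into the pairing and moving the local operators across the inner product rewrites the whole family of inequalities as the single requirement that $C_\Phi$ pair nonnegatively with every operator of the form $(A^t\otimes B)^*C_\Psi(A^t\otimes B)$; by the tensor-product duality for mapping cones developed in the cited literature \cite{SSZ09,JS11} this is exactly positivity of a map $\Psi\otimes\Phi^{\#}$, where $\Phi^{\#}$ denotes $\Phi$ composed appropriately with the transpose $t$ and the Hilbert--Schmidt adjoint $\dagger$. The hypotheses $\Phi=\Phi^\dagger=t\circ\Phi\circ t$ are precisely what collapse $\Phi^{\#}$ to $\Phi$, yielding $\cl{M}_{\{\Phi\}}^\circ = \{\Psi\in\cl{P}_1 : \Psi\otimes\Phi \text{ is positive}\}$. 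The restriction to $\cl{P}_1$ is automatic, since every mapping cone contains $\SPk{1}$ and hence its dual is contained in $\SPk{1}^\circ=\cl{P}_1$. I expect the careful bookkeeping of transposes and adjoints to be the main obstacle in this step; the self-duality hypotheses are built into the statement exactly so that they disappear from the final form.

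For the untypicality, the key observation is that the class of typical mapping cones is closed under the dual operation $\cl{M}\mapsto\cl{M}^\circ$. Indeed, $\cl{P}_k^\circ=\SPk{k}$ and $\SPk{k}^\circ=\cl{P}_k$ (with $\cl{CP}$ self-dual), while $(\cl{M}\circ t)^\circ=\cl{M}^\circ\circ t$, and the operations $\cap$ and $\vee$ are interchanged by duality (up to taking closures, which is harmless in finite dimension). Hence the dual of any cone assembled from the $\cl{P}_k$ and $\SPk{k}$ through $t$, $\cap$, and $\vee$ is again such a cone, i.e. again typical. Now suppose, toward a contradiction, that $\cl{M}_{\{\Phi\}}^\circ$ were typical. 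Since $\cl{M}_{\{\Phi\}}$ is a closed convex cone, the bipolar theorem gives $\big(\cl{M}_{\{\Phi\}}^\circ\big)^\circ=\cl{M}_{\{\Phi\}}$, so $\cl{M}_{\{\Phi\}}$ would be the dual of a typical cone and therefore typical. But $\Phi$ is atomic, so Theorem~\ref{thm:atomic_untypical} asserts that $\cl{M}_{\{\Phi\}}$ is untypical, a contradiction. Therefore $\cl{M}_{\{\Phi\}}^\circ$ is untypical, which completes the argument.
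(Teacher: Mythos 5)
Your proposal is correct and takes essentially the same route as the paper: the paper likewise obtains the identification of $\cl{M}_{\{\Phi\}}^\circ$ by citing the duality formula of \cite{Sto11} (with the hypotheses $\Phi = \Phi^\dagger = t \circ \Phi \circ t$ collapsing it to positivity of $\Psi \otimes \Phi$), and deduces untypicality from the fact that the dual of a typical mapping cone is typical combined with Theorem~\ref{thm:atomic_untypical}. Your explicit bipolar contrapositive and the verification that the building blocks $\cl{P}_k$, $\SPk{k}$, $\cl{M} \mapsto \cl{M} \circ t$, $\cap$, and $\vee$ are stable under duality simply fill in details the paper's two-sentence proof leaves implicit.
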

\begin{proof}
	The formula for $\cl{M}_{\{\Phi\}}^\circ$ is given by \cite{Sto11}. Since the dual cone of any typical mapping cone is again typical, the result follows from Theorem~\ref{thm:atomic_untypical}.
\end{proof}

We expect that the following lemma is well-known, but we have not been able to find a reference for it.
\begin{lemma}\label{lem:spin_plf} 
	Let $\Phi : \cl{L}(\cl{H}_n) \rightarrow \cl{L}(\cl{H}_n)$ satisfy $\Phi = \Phi^\dagger = t \circ \Phi \circ t$. Let $\cl{H} = \cl{H}_n \otimes \cl{H}_m$ and let $\psi : \cl{L}(\cl{H}_m) \rightarrow \bb{C}$ be a positive linear functional with $\psi(I_m) = 1$. Then $\Phi$ is positive if and only if $\Phi \otimes \psi$ is positive.
\end{lemma}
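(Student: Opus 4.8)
The plan is to prove Lemma~\ref{lem:spin_plf} by establishing both directions of the equivalence, with the forward direction being essentially trivial and the reverse direction carrying all the weight. One direction is immediate: if $\Phi$ is positive, then $\Phi \otimes \psi$ is the tensor product of a positive map with a positive linear functional, and such a tensor product is always positive. Indeed, $\psi$ being a positive functional means $\psi = \Ad{\mathbf{c}}$-type contraction onto $\bb{C}$ (more precisely, $\psi(Y) = \mathbf{c}^* Y \mathbf{c}$ for a suitable vector, or a sum of such, by the characterization of positive functionals), so $\Phi \otimes \psi = (id \otimes \psi) \circ (\Phi \otimes id)$ can be seen as a composition involving the completely positive map $id \otimes \psi$ and the positive map $\Phi \otimes id$; since $\psi(I_m) = 1$ normalizes things appropriately, positivity is preserved. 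So the real content is the converse: positivity of $\Phi \otimes \psi$ forces positivity of $\Phi$.

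For the reverse direction, the natural approach is to recover $\Phi$ from $\Phi \otimes \psi$ by ``undoing'' the functional $\psi$. First I would exploit the normalization $\psi(I_m) = 1$ to embed $\cl{L}(\cl{H}_n)$ into $\cl{L}(\cl{H}_n) \otimes \cl{L}(\cl{H}_m)$ via $X \mapsto X \otimes I_m$. Then for any $X \geq 0$ in $\cl{L}(\cl{H}_n)$, the operator $X \otimes I_m$ is positive, and applying $\Phi \otimes \psi$ gives $(\Phi \otimes \psi)(X \otimes I_m) = \Phi(X) \cdot \psi(I_m) = \Phi(X)$, which is positive by hypothesis. This already shows $\Phi(X) \geq 0$ for all $X \geq 0$, so $\Phi$ is positive. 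This direct computation appears to dispatch the reverse direction almost immediately, which makes me suspect that the stated hypotheses $\Phi = \Phi^\dagger = t \circ \Phi \circ t$ are needed not for this clean argument but rather for a subtler point, or they are genuinely needed in the forward direction to handle the fact that $\Phi \otimes \psi$ must be positive as a map on the full tensor-product algebra (where one tests against \emph{all} positive operators on $\cl{H}_n \otimes \cl{H}_m$, not just product operators).

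The main obstacle, and where I would concentrate effort, is the forward direction: showing that positivity of $\Phi$ implies positivity of $\Phi \otimes \psi$ on \emph{all} of $\cl{L}(\cl{H}_n \otimes \cl{H}_m)$, including entangled (non-product) positive inputs. Here the symmetry conditions $\Phi = \Phi^\dagger = t \circ \Phi \circ t$ should be essential, because a general positive map need not remain positive after tensoring with even the identity functional in disguise. The strategy would be to write $\psi$ in a canonical form---since $\psi$ is a positive functional with $\psi(I_m)=1$, we may write $\psi(Y) = \Tr(\rho Y)$ for some density operator $\rho \geq 0$ with $\Tr(\rho) = 1$, and by diagonalizing $\rho$ reduce to the case of a vector state, then use linearity and convexity to recombine. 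For a pure state $\psi(Y) = \mathbf{c}^* Y \mathbf{c}$, the map $\Phi \otimes \psi$ can be rewritten using $\Ad{}$-type operators, and the conditions $\Phi = \Phi^\dagger$ and $t \circ \Phi \circ t = \Phi$ let us convert between the ``left'' and ``right'' actions and between $\Phi$ and its transpose-conjugate, which is exactly what is required to push positivity through the entangling structure. I would verify the pure-state case by an explicit block-matrix positivity argument and then invoke convexity of the cone of positive maps to conclude the general case.
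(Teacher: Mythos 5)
Your ``reverse'' direction is correct and is exactly the paper's: evaluate the positive map $\Phi \otimes \psi$ at $X \otimes I_m$ and use $\psi(I_m)=1$ to get $\Phi(X) \geq 0$. The genuine gap is the ``forward'' direction ($\Phi$ positive $\Rightarrow$ $\Phi \otimes \psi$ positive), which you first declare trivial, then retract, and in the end never prove. Your first-paragraph justification factors $\Phi \otimes \psi = (id \otimes \psi) \circ (\Phi \otimes id)$; to conclude positivity from this you would need $\Phi \otimes id_m$ to be a positive map, which is equivalent to $m$-positivity of $\Phi$ -- precisely what you cannot assume (take $\Phi = t$ and $m \geq 2$: the algebraic identity still holds, but $\Phi \otimes id$ is not positive, so the factorization proves nothing). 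Having sensed this, your third paragraph asserts that the symmetry hypotheses $\Phi = \Phi^\dagger = t \circ \Phi \circ t$ ``should be essential'' and defers the pure-state case to an unspecified ``explicit block-matrix positivity argument.'' That is a plan, not a proof, and it rests on a false premise: a general positive map \emph{does} remain positive after tensoring with a positive functional, with no symmetry hypotheses needed.

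The fix is to compose in the other order: $\Phi \otimes \psi = \Phi \circ (id_n \otimes \psi)$, and $id_n \otimes \psi$ is completely positive. Indeed, writing $\psi(Y) = \Tr(\rho Y)$ with $\rho = \sum_i \lambda_i \mathbf{c}_i\mathbf{c}_i^*$, $\lambda_i \geq 0$ (your own reduction), one has $(id_n \otimes \psi)(X) = \sum_i \lambda_i (I_n \otimes \mathbf{c}_i)^* X (I_n \otimes \mathbf{c}_i) \geq 0$ for every $X \geq 0$, entangled or not, because each term is a compression; then the ordinary positivity of $\Phi$ on $\cl{L}(\cl{H}_n)$ finishes the argument. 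So the ``obstacle'' you identify -- entangled inputs -- is no obstacle: the functional collapses the second tensor factor before $\Phi$ ever acts. For comparison, the paper proves this direction differently, and its route is where the symmetry hypotheses actually enter: citing St{\o}rmer's duality results, for $\Phi = \Phi^\dagger = t \circ \Phi \circ t$ positive the duality of $\cl{P}_1$ and $\SPk{1}$ gives that $\Phi \otimes \omega$ is positive for the product functional $\omega = \psi_1 \otimes \psi_2$, and then the easy direction is applied to strip off $\psi_2$, leaving $\Phi \otimes \psi_1$ positive. Either route closes the gap; your proposal as written closes neither.
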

\begin{proof}
	For the ``if'' direction, let $0 \leq \rho \in \cl{L}(\cl{H}_n)$. Then $0 \leq (\Phi \otimes \psi)(\rho \otimes I_m) = \Phi(\rho) \otimes 1 = \Phi(\rho)$, so $\Phi$ is positive.
	
	For the ``only if'' direction, let $\psi_i$ ($i = 1,2$) be positive linear functionals on $\cl{L}(\cl{H}_m)$ with $\psi_i(I_m) = 1$. Define $\omega := \psi_1 \otimes \psi_2$. Since $\cl{P}_1(\cl{H})$ is the dual cone of $\cl{SP}_1(\cl{H})$, it follows from \cite{Sto11} that $\Phi \otimes \omega$ is positive. Thus $\Phi \otimes \psi_1 \otimes \psi_2$ is positive, so $\Phi \otimes \psi_1$ is positive by the first part of this proof.
\end{proof}

By Lemma~\ref{lem:spin_plf}, if $\Phi \in \cl{P}_1(\cl{H})$ then
\begin{align}\label{eq:lem_3_proj}
	\Phi \otimes E_k \text{ is positive } \ \Longleftrightarrow \ \Phi \otimes F_k \text{ is positive},
\end{align}
where $E_k$ and $F_k$ are the projections onto the spin factors $V_k$ and $V_k \otimes I_m$, respectively.
\begin{thm}\label{thm:spin_anal_kpos} 
	Let $\cl{H} := \bb{C}^{2^n} \otimes \cl{H}_m$ and $\Phi \in \cl{P}_1(\cl{H})$. Then $\Phi \in \cl{M}_{F_k}^\circ$ if and only if $\Phi \otimes E_k$ is positive.
\end{thm}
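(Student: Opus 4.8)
The plan is to chain together two facts that are already in hand: the duality theorem of \cite{Sto11} used in Proposition~\ref{prop:dual_atomic}, which converts membership in a dual cone into a tensor-positivity condition, and Equation~\eqref{eq:lem_3_proj}, which lets me trade the projection $F_k$ for $E_k$. Concretely, I would first establish that
\[
\cl{M}_{F_k}^\circ = \{ \Phi \in \Pk{1} : \Phi \otimes F_k \text{ is positive} \},
\]
and then apply \eqref{eq:lem_3_proj} to replace $\Phi \otimes F_k$ by $\Phi \otimes E_k$.

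For the first step I would verify that $F_k$ satisfies the symmetry hypotheses under which the formula $\cl{M}_{\{\Omega\}}^\circ = \{\Phi \in \Pk{1} : \Phi \otimes \Omega \text{ is positive}\}$ of \cite{Sto11} applies, namely $F_k = F_k^\dagger = t \circ F_k \circ t$. Since $F_k = E_k \otimes \tau^\prime$, this reduces to the analogous statements for $E_k$ and $\tau^\prime$. The projection $E_k$ onto $V_k = {\rm span}\{I, s_1, \ldots, s_k\}$ is self-adjoint for the Hilbert--Schmidt inner product, so $E_k = E_k^\dagger$; and $V_k$ is invariant under transposition because each $s_i$ is sent by $t$ to $\pm s_i$, whence $t \circ E_k \circ t = E_k$. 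A short direct computation gives $\tau^\prime = \tau^{\prime\dagger}$ and $t \circ \tau^\prime \circ t = \tau^\prime$. Hence $F_k$ meets the required conditions and the cited duality formula yields the displayed description of $\cl{M}_{F_k}^\circ$.

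For the second step, Equation~\eqref{eq:lem_3_proj} (a consequence of Lemma~\ref{lem:spin_plf}) asserts exactly that for $\Phi \in \Pk{1}(\cl{H})$ the map $\Phi \otimes E_k$ is positive if and only if $\Phi \otimes F_k$ is positive. Chaining the two equivalences gives $\Phi \in \cl{M}_{F_k}^\circ \iff \Phi \otimes F_k \text{ positive} \iff \Phi \otimes E_k \text{ positive}$, which is the claim. The symmetry verification is routine; the only genuine external input is the duality theorem of \cite{Sto11}, and the main point to get right is that the version being cited holds for a singly-generated mapping cone under the symmetry conditions alone, without the atomicity hypothesis (which entered Proposition~\ref{prop:dual_atomic} only to force untypicality, not to validate the dual-cone formula).
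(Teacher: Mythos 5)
Your proposal is correct and follows essentially the same route as the paper, whose entire proof is the one-line observation that the result follows from the dual-cone formula of \cite[Corollary~4]{Sto11} together with Equation~\eqref{eq:lem_3_proj}. Your additional verification that $F_k = E_k \otimes \tau^\prime$ satisfies $F_k = F_k^\dagger = t \circ F_k \circ t$ (and your remark that atomicity in Proposition~\ref{prop:dual_atomic} served only to establish untypicality, not the duality formula itself) simply makes explicit hypotheses the paper leaves implicit, and your checks are accurate since each $s_i$ satisfies $s_i^t = \pm s_i$ with the signs cancelling in $t \circ E_k \circ t$.
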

\begin{proof}
	This result follows immediately from \cite[Corollary~4]{Sto11} and Equation~\eqref{eq:lem_3_proj}.
\end{proof}

Theorem~\ref{thm:spin_anal_kpos} provides a natural analogy between the cones of $k$-positive maps and the mapping cones $\cl{M}_{F_k}^\circ$ (and by duality, a natural analogy between the cones of $k$-superpositive maps and the mapping cones $\cl{M}_{F_k}$). Indeed, $\Phi \otimes id_k$ is positive if and only if $\Phi$ is $k$-positive whereas Theorem~\ref{thm:spin_anal_kpos} says that $\Phi \otimes E_k$ is positive if and only if $\Phi \in \cl{M}_{F_k}^\circ$. One important distinction between these two cases, however, arises from the fact that $E_k$ (and hence $F_k$) is atomic for $k \neq 2, 3, 5$ \cite{Rob85}. It follows from Theorem~\ref{thm:atomic_untypical} that the cones $\cl{M}_{F_{k}}$ (which are analogous to the $k$-superpositive maps) and $\cl{M}_{F_{k}}^\circ$ (which are analogous to the $k$-positive maps) are untypical when $k \neq 2, 3, 5$.

Also, much like we have $\SPk{1} \subset \SPk{2} \subset \cdots$, we have a natural family of inclusions in this setting as well. Because we have $F_k = PF_{k+1}$, where $P$ is the conditional projection of the $C^*$-algebra generated by $V_{k+1}$ onto that generated by $V_k$, it follows that $\cl{M}_{F_{1}} \subset \cl{M}_{F_{2}} \subset \cdots$.

\section{The Partial Order Induced by Mapping Cones}\label{sec:partial_order}

The method of generating mapping cones of Section~\ref{sec:cone_generated} leads to a natural partial order on positive maps (or even sets of positive maps). We write $\Phi \succeq \Psi$ if $\cl{M}_{\{\Phi\}} \supseteq \cl{M}_{\{\Psi\}}$, and we note that ``$\succeq$'' is a partial order on the set of positive maps if we identify $\Phi$ with $\Psi$ whenever $\cl{M}_{\{\Phi\}} = \cl{M}_{\{\Psi\}}$. We write $\Phi \approx \Psi$ in this case. It would be interesting to have an alternate characterization of when $\Phi \approx \Psi$, but we leave this as an open problem.

This partial order has a natural interpretation in quantum information theory. If $\Phi \succeq \Psi$, then we can think of $\Phi$ as being better at detecting entanglement than $\Psi$. Indeed, if $\Phi \succeq \Psi$ then there exist $\{A_i\}$ and $\{B_i\}$ such that $\Psi = \sum_i {\rm Ad}_{A_i} \circ \Phi \circ {\rm Ad}_{B_i}$. It is then easily-verified that if $X \geq 0$ then $(id_n \otimes \Psi)(X) \ngeq 0$ implies that $(id_n \otimes \Phi)(X) \ngeq 0$. Using the terminology of \cite{LKCH00}, this means that $\Phi$ is ``finer'' than $\Psi$, which means that any entanglement detected by $\Psi$ is also detected by $\Phi$.

However, the reverse implication does not hold -- there are positive maps $\Phi$, $\Psi$ such that $\Phi$ is finer than $\Psi$, but $\Phi \nsucceq \Psi$. To see this, recall from \cite{LKCH00} that $\Phi$ is finer than $\Psi$ if and only if there is a completely positive map $\Omega$ such that $\Psi = \Phi + \Omega$. Let $n \geq 3$ and let $P_1$ and $P_2$ be nonzero orthogonal projections in $\cl{L}(\cl{H}_n)$ such that ${\rm rank}(P_1) \geq 2$, with sum $P_1+P_2=I$. Let $\Phi : \cl{L}(\cl{H}_n) \rightarrow \cl{L}(\cl{H}_n)$ be a positive (but not completely positive) map satisfying $\Phi = {\rm Ad}_{P_1} \circ \Phi \circ {\rm Ad}_{P_1}$ and define $\Omega := {\rm Ad}_{P_2}$. If $A \in \cl{L}(\cl{H}_n)$, then ${\rm Ad}_A \circ \Phi = {\rm Ad}_A \circ {\rm Ad}_{P_1} \circ \Phi$ and similarly for $\Phi \circ {\rm Ad}_A$, so it follows that $\cl{M}_{\{\Phi\}} \subseteq \cl{M}_{\{\Phi+\Omega\}}$. Similar arguments show that $\cl{M}_{\{\Omega\}} \subseteq \cl{M}_{\{\Phi+\Omega\}}$, so $\cl{M}_{\{\Phi+\Omega\}} = \cl{M}_{\{\Phi\}} \vee \cl{M}_{\{\Omega\}}$. It follows that $\cl{M}_{\{\Phi+\Omega\}} \supsetneq \cl{M}_{\{\Phi\}}$, so $\Phi \nsucceq (\Phi + \Omega)$ even though $\Phi$ is finer than $\Phi + \Omega$.

The reverse implication also does not hold in the $n = 2$ case. Given any map $\Psi \in \cl{P}_1 \backslash (\cl{CP} \circ t)$ in this case, there is always a finer positive map $\Phi \in \cl{CP} \circ t$ (since we can write $\Psi = \Omega + \Phi$ for some $\Omega \in \cl{CP}$, $\Phi \in \cl{CP} \circ t$). However, there clearly is no map $\Phi \in \cl{CP} \circ t$ with $\Phi \succeq \Psi$, since $\Psi \notin \cl{CP} \circ t \supseteq \cl{M}_{\{\Phi\}}$.

As an example to illustrate the relationship between this partial order and entanglement detection, recall the reduction map $R(X) = \Tr(X)I - X$. It was shown in Proposition~\ref{prop:sing_map_cone_reduc} that $R \circ t \in \SPk{2}$ (so in particular, $R \circ t$ is completely positive). It follows that $\Phi \circ t = R$ for the completely positive map $\Phi := R \circ t$, so $t \succeq R$. This tells us that $t$ is more useful for entanglement detection than $R$, as any entanglement detected by $R$ must also be detected by $t$; a fact that is well-known \cite{HH99}.

\vspace{0.1in} \noindent{\bf Acknowledgements.} Nathaniel Johnston was supported by the University of Guelph Brock Scholarship. {\L}ukasz Skowronek acknowledges the support by the International PhD Projects Programme of the Foundation for Polish Science within the European Regional Development Fund of the European Union, agreement no. MPD/2009/6.

\bibliographystyle{alpha}
\bibliography{bib}

\newcommand{\etalchar}[1]{$^{#1}$}
\begin{thebibliography}{BDM{\etalchar{+}}99}

\bibitem[And04]{And04}
T.~Ando.
\newblock Cones and norms in the tensor product of matrix spaces.
\newblock {\em Linear Algebra Appl.}, 379:3--41, 2004.

\bibitem[BDM{\etalchar{+}}99]{Bennett99}
C.~H. Bennett, D.~P. DiVicenzo, T.~Mor, P.~W. Shor, J.~A. Smolin, and B.~M.
  Terhal.
\newblock Unextendible product bases and bound entanglement.
\newblock {\em Phys. Rev. Lett.}, 82:5385--5388, 1999.

\bibitem[BFP04]{BFP04b}
F.~Benatti, R.~Floreanini, and M.~Piani.
\newblock Quantum dynamical semigroups and non-decomposable positive maps.
\newblock {\em Phys. Lett. A}, 326:187--198, 2004.

\bibitem[CAG99]{CAG99}
N.~J. Cerf, C.~Adami, and R.~M. Gingrich.
\newblock Reduction criterion for separability.
\newblock {\em Phys. Rev. A}, 60:898--909, 1999.

\bibitem[CDKL01]{CDKL01}
J.~I. Cirac, W.~D\"{u}r, B.~Kraus, and M.~Lewenstein.
\newblock Entangling operations and their implementation using a small amount
  of entanglement.
\newblock {\em Phys. Rev. Lett.}, 86:544--547, 2001.

\bibitem[Cho75a]{C75}
M.-D. Choi.
\newblock Completely positive linear maps on complex matrices.
\newblock {\em Linear Algebra Appl.}, 10:285--290, 1975.

\bibitem[Cho75b]{Cho75}
M.-D. Choi.
\newblock Positive semidefinite biquadratic forms.
\newblock {\em Linear Algebra Appl.}, 12:95--100, 1975.

\bibitem[CK06]{CK06}
D.~Chru\'{s}ci\'{n}ski and A.~Kossakowski.
\newblock On partially entanglement breaking channels.
\newblock {\em Open Syst. Inf. Dyn.}, 13:17--26, 2006.

\bibitem[CKL92]{CKL92}
S.-J. Cho, S.-H. Kye, and S.~G. Lee.
\newblock Generalized {C}hoi maps in $3$-dimensional matrix algebras.
\newblock {\em Linear Algebra Appl.}, 171:213--224, 1992.

\bibitem[ES79]{ES79}
E.~Effros and E.~St{\o}rmer.
\newblock Positive projections and jordan structure in operator algebras.
\newblock {\em Math. Scand.}, 45:127--138, 1979.

\bibitem[Hal06]{Hal06}
W.~Hall.
\newblock A new criterion for indecomposability of positive maps.
\newblock {\em J. Phys. A: Math. Gen.}, 39:14119, 2006.

\bibitem[HH99]{HH99}
M.~Horodecki and P.~Horodecki.
\newblock Reduction criterion of separability and limits for a class of
  distillation protocols.
\newblock {\em Phys. Rev. A}, 59:4206--4216, 1999.

\bibitem[HHH96]{HHH96}
M.~Horodecki, P.~Horodecki, and R.~Horodecki.
\newblock Separability of mixed states: Necessary and sufficient conditions.
\newblock {\em Phys. Lett. A}, 223:1--8, 1996.

\bibitem[HHH98]{HHH98}
M.~Horodecki, P.~Horodecki, and R.~Horodecki.
\newblock Mixed-state entanglement and distillation: Is there a ``bound''
  entanglement in nature?
\newblock {\em Phys. Rev. Lett.}, 80:5239--5242, 1998.

\bibitem[HHHH09]{HHH09}
R.~Horodecki, P.~Horodecki, M.~Horodecki, and K.~Horodecki.
\newblock Quantum entanglement.
\newblock {\em Rev. Mod. Phys.}, 81:865--942, 2009.

\bibitem[HOS84]{HS84}
H.~Hanche-Olsen and E.~St{\o}rmer.
\newblock {\em Jordan operator algebras}, volume~21 of {\em Monographs and
  Studies in Mathematics}.
\newblock Pitman, 1984.

\bibitem[HSR03]{HSR03}
M.~Horodecki, P.~W. Shor, and M.~B. Ruskai.
\newblock General entanglement breaking channels.
\newblock {\em Rev. Math. Phys.}, 15:629--641, 2003.

\bibitem[Jam72]{J72}
A.~Jamio{\l}kowski.
\newblock Linear transformations which preserve trace and positive
  semidefiniteness of operators.
\newblock {\em Rep. Math. Phys.}, 3:275--278, 1972.

\bibitem[JS12]{JS11}
N.~Johnston and E.~St{\o}rmer.
\newblock Mapping cones are operator systems.
\newblock \emph{Bull. Lond. Math. Soc.}. doi:
  \href{http://dx.doi.org/10.1112/blms/bds006}{10.1112/blms/bds006}, 2012.

\bibitem[LKCH00]{LKCH00}
M.~Lewenstein, B.~Kraus, J.~I. Cirac, and P.~Horodecki.
\newblock Optimization of entanglement witnesses.
\newblock {\em Phys. Rev. A}, 62:052310, 2000.

\bibitem[LMS10]{LS2010}
J.~M. Leinaas, J.~Myrheim, and P.~{\O}. Sollid.
\newblock Low-rank extremal positive-partial-transpose states and unextendible
  product bases.
\newblock {\em Phys. Rev. A}, 81, 2010.

\bibitem[Mar10]{Mar10}
M.~Marciniak.
\newblock On extremal positive maps acting between type {I} factors.
\newblock In {\em Noncommutative Harmonic Analysis with Applications to
  Probability {II}}, pages 201--221, 2010.

\bibitem[Osa91]{Osa91}
H.~Osaka.
\newblock Indecomposable positive maps in low dimensional matrix algebras.
\newblock {\em Linear Algebra Appl.}, 153:73--83, 1991.

\bibitem[Per96]{P96}
A.~Peres.
\newblock Separability criterion for density matrices.
\newblock {\em Phys. Rev. Lett.}, 77:1413--1415, 1996.

\bibitem[Rai97]{Rai97}
E.~M. Rains.
\newblock Entanglement purification via separable superoperators.
\newblock E-print:
  \href{http://arxiv.org/abs/quant-ph/9707002}{arXiv:quant-ph/9707002}, 1997.

\bibitem[Rob85]{Rob85}
A.~G. Robertson.
\newblock Positive projections on {C}$^*$-algebras and an extremal positive
  map.
\newblock {\em J. London Math. Soc.}, 32:133--140, 1985.

\bibitem[Sko11a]{Sko11}
{\L}.~Skowronek.
\newblock Cones with a mapping cone symmetry in the finite-dimensional case.
\newblock {\em Linear Algebra Appl.}, 435:361--370, 2011.

\bibitem[Sko11b]{S2011}
{\L}.~Skowronek.
\newblock Three-by-three bound entanglement with general unextendible product
  bases.
\newblock {\em J. Math. Phys.}, 52:122202, 2011.

\bibitem[Sko12]{Sko12}
{\L}.~Skowronek.
\newblock There is no analogue of the positive partial transpose criterion in
  the three-by-three case.
\newblock Upcoming draft, 2012.

\bibitem[SS12]{SS10}
{\L}.~Skowronek and E.~St{\o}rmer.
\newblock Choi matrices, norms and entanglement associated with positive maps
  on matrix algebras.
\newblock {\em J. Funct. Anal.}, 262:639--647, 2012.

\bibitem[SS{\.Z}09]{SSZ09}
{\L}.~Skowronek, E.~St{\o}rmer, and K.~{\.Z}yczkowski.
\newblock Cones of positive maps and their duality relations.
\newblock {\em J. Math. Phys.}, 50:062106, 2009.

\bibitem[St{\o}63]{S63}
E.~St{\o}rmer.
\newblock Positive linear maps of operator algebras.
\newblock {\em Acta Math.}, 110:233--278, 1963.

\bibitem[St{\o}86]{S86}
E.~St{\o}rmer.
\newblock Extension of positive maps into ${B}({H})$.
\newblock {\em J. Funct. Anal.}, 66:235--254, 1986.

\bibitem[St{\o}11a]{S11}
E.~St{\o}rmer.
\newblock Mapping cones of positive maps.
\newblock {\em Math. Scand.}, 108:223--232, 2011.

\bibitem[St{\o}11b]{Sto11}
E.~St{\o}rmer.
\newblock Tensor products of positive maps of matrix algebras.
\newblock E-print: \href{http://arxiv.org/abs/1101.2114}{arXiv:1101.2114}
  [math.OA], 2011.

\bibitem[TH00]{TH00}
B.~M. Terhal and P.~Horodecki.
\newblock Schmidt number for density matrices.
\newblock {\em Phys. Rev. A}, 61:040301(R), 2000.

\bibitem[Tom85]{Tom85}
J.~Tomiyama.
\newblock On the geometry of positive maps in matrix algebras {II}.
\newblock {\em Linear Algebra Appl.}, 69:169--177, 1985.

\bibitem[TT88]{TT88}
K.~Tanahashi and J.~Tomiyama.
\newblock Indecomposable positive maps in matrix algebras.
\newblock {\em Canad. Math. Bull.}, 31:308--317, 1988.

\bibitem[Wor76]{W76}
S.~L. Woronowicz.
\newblock Positive maps of low dimensional matrix algebras.
\newblock {\em Rep. Math. Phys.}, 10:165--183, 1976.

\end{thebibliography}
\end{document}